\documentclass[11pt]{article}
\usepackage[a4paper]{geometry}
\usepackage{amsfonts, amsmath, amssymb, amsthm, graphicx, caption, authblk, url, multirow, makecell, framed, float, xcolor, enumitem, tikz, hyperref}
\usepackage{algorithm,algorithmicx,algpseudocode,mathrsfs}
\usetikzlibrary{fit,shapes.arrows}
\theoremstyle{definition}
\newtheorem{theorem}{Theorem}
\newtheorem{definition}{Definition}
\newtheorem{example}{Example}

\newtheorem{lemma}{Lemma}
\newtheorem{corollary}{Corollary}
\theoremstyle{remark}

\newcommand{\h}[1]{\underline{\mathbf{#1}}}

\newcommand*{\mybox}[1]{%
  \framebox{\raisebox{0cm}[0.5\baselineskip][0.05\baselineskip]{%
    \hbox to 0.10cm {\hss#1\hss}}}\hspace{0.05cm}}

\begin{document}
\title{Cyclic Equalizability Characterized by Parikh Vectors}
\author[1]{Sarunyu Thongjarast\thanks{\texttt{thong125@mit.edu}}}
\author[2]{Sarit Pasiphol\thanks{\texttt{6632234621@student.chula.ac.th}}}
\author[2]{Suthee Ruangwises\thanks{\texttt{suthee@cp.eng.chula.ac.th}}}
\affil[1]{Massachusetts Institute of Technology, Cambridge, MA, USA}
\affil[2]{Department of Computer Engineering, Faculty of Engineering, Chulalongkorn University, Bangkok, Thailand}
\date{}
\maketitle

\begin{abstract}
Cyclic equalizability is a notion introduced by Shinagawa and Nuida in 2025, in the study of card-based cryptography. Informally, a collection of words is cyclically equalizable if, by inserting the same letters at the same positions in all words, they can be transformed into words that are cyclic shifts of one another. Shinagawa and Nuida showed that two binary words of equal length are cyclically equalizable if and only if they have the same Hamming weight. They also posed the problem of characterizing cyclic equalizability over larger alphabets. In this paper, we completely characterize cyclic equalizability for two words over an arbitrary finite alphabet by proving that two words are cyclically equalizable if and only if they have the same Parikh vector.

\textbf{Keywords:} combinatorics on words, cyclic equalizability, Parikh vector, Abelian equivalence, card-based cryptography
\end{abstract}

\section{Introduction}
\emph{Card-based cryptography} is a research area that studies how to perform secure computation using a deck of physical cards. Since the seminal \emph{five-card trick} of den Boer~\cite{5card} in 1990, this field has developed a rich theory connecting combinatorics, algebra, and computation~\cite{chosen,formal}. In recent years, new connections have emerged between card-based protocols and combinatorics on words, suggesting that structural properties of words can play a fundamental role in understanding such protocols.

A central operation in card-based cryptography is the \emph{random cut}, which cyclically shifts a sequence of face-down cards by an unknown offset. As a result, sequences that differ only by a cyclic shift become indistinguishable. This naturally leads to the notion of cyclic equivalence (also known as conjugacy) of words, which has been extensively studied in combinatorics on words~\cite{combi3,combi2,combi}.

Motivated by this observation, Shinagawa and Nuida~\cite{cyclic} introduced the notion of cyclic equalizability in 2025. Informally, a collection of words is \emph{cyclically equalizable} if, by inserting the same letters at the same positions in all words, they can be transformed into words that are cyclic shifts of one another. This notion arises naturally in the analysis of card-based protocols, where inserting letters corresponds to inserting additional cards to the sequence without revealing the existing cards.

For binary words of equal length, Shinagawa and Nuida showed that cyclic equalizability admits a simple characterization: two words are cyclically equalizable if and only if they have the same Hamming weight. They also posed several open problems, including characterizing cyclic equalizability for words over larger alphabets such as $\{0,1,2\}$ (and more generally, over arbitrary finite alphabets).

In combinatorics on words, the Parikh vector of a word records the number of occurrences of each letter. Two words are said to be Abelian equivalent if they have the same Parikh vector, i.e. if they are permutations of each other. Related notions have been studied in combinatorics on words, including conjugacy and other structural properties of words under insertion operations~\cite{insertion}, and subword counting in circular words (conjugacy classes)~\cite{subword}.

\subsection{Our Contribution}
In this paper, we completely characterize cyclic equalizability for two words over an arbitrary finite alphabet. In particular, we prove that two words over a finite alphabet are cyclically equalizable if and only if they have the same Parikh vector.

As an immediate consequence, we obtain a new interpretation of cyclic equalizability in terms of Abelian equivalence. Our result strictly generalizes the binary case of Shinagawa and Nuida \cite{cyclic}, and resolves their question for the ternary alphabet as a special case. Also, the result has implications for card-based cryptography: it characterizes when two encodings can be made indistinguishable under random cuts by inserting additional cards.

\section{Preliminaries}
Let $\Sigma$ be a finite alphabet. A \emph{word} over $\Sigma$ is a finite sequence of letters from $\Sigma$. The set of all words over $\Sigma$ is denoted by $\Sigma^*$, and the empty word is denoted by $\varepsilon$. For a word $w \in \Sigma^*$, its length is denoted by $|w|$. If $w = a_0 a_1 \cdots a_{n-1}$, where $a_i \in \Sigma$, then $|w| = n$.

\subsection{Parikh Vectors and Abelian Equivalence}
For a letter $a \in \Sigma$, let $|w|_a$ denote the number of occurrences of $a$ in a word $w$. The \emph{Parikh vector} of a word $w \in \Sigma^*$ is the vector
\[
\Psi(w) = (|w|_a)_{a \in \Sigma}.
\]

Two words $u, v \in \Sigma^*$ are said to be \emph{Abelian equivalent} if $\Psi(u) = \Psi(v)$, i.e. if they contain the same number of occurrences of each letter. Equivalently, two words are Abelian equivalent if one is a permutation of the other.

\subsection{Cyclic Equivalence}
Let $w = a_0 a_1 \cdots a_{n-1} \in \Sigma^n$ with $n \ge 1$. For an integer $r \in \mathbb{Z}_n$, the \emph{cyclic shift} of $w$ by $r$ positions is the word
\[
w^{(r)} = a_r a_{r+1} \cdots a_{n-1} a_0 a_1 \cdots a_{r-1},
\]
where the indices are taken modulo $n$.

Two words $u, v \in \Sigma^*$ of equal length are said to be \emph{cyclically equivalent} (or \emph{conjugate}), denoted by $u \sim v$, if $v = u^{(r)}$ for some integer $r$. We also call such $r$ an \emph{offset} of $u$ and $v$.

\subsection{Simultaneous Insertion}
Let $k \ge 2$ and let $w_1, \dots, w_k \in \Sigma^n$ be words of equal length $n$. A \emph{simultaneous insertion} is an operation that transforms each word $w_i = a_{i,0} a_{i,1} \cdots a_{i,n-1}$ into the word
\[
u_{0} a_{i,0} u_{1} a_{i,1} \cdots u_{n-1} a_{i,n-1} u_{n},
\]
where each $u_{j}$ is a word in $\Sigma^*$. In other words, we insert the same strings at the same positions in all words.

For example, if $k=3$, $w_1 = 121$, $w_2 = 334$, $w_3 = 135$, and we insert $u_0 = 14$, $u_1 = 5$, $u_2 = \varepsilon$ (an empty string), $u_3 = 56$, then $w_1$, $w_2$, and $w_3$ become
\[
\mathbf{\underline{14}}1\mathbf{\underline{5}}21\mathbf{\underline{56}}, \quad \mathbf{\underline{14}}3\mathbf{\underline{5}}34\mathbf{\underline{56}}, \quad \text{and} \quad
\mathbf{\underline{14}}1\mathbf{\underline{5}}35\mathbf{\underline{56}},
\]
respectively, where the bold underlined letters indicate the inserted letters.

\subsection{Cyclic Equalizability}
We now define the central notion studied in this paper.

Let $k \ge 2$ and let $w_1, \dots, w_k \in \Sigma^*$ be words of equal length. We say these words are \emph{cyclically equalizable} if there exist words $w'_1, \dots, w'_k \in \Sigma^*$ such that
\begin{enumerate}
    \item Each $w'_i$ is obtained from $w_i$ by a simultaneous insertion, and
    \item The words $w'_1, \dots, w'_k$ are cyclically equivalent.
\end{enumerate}

For example, the words $123$ and $132$ are cyclically equalizable, since they can be transformed into
\[
12\mathbf{\underline{1}}3 \quad \text{and} \quad 13\mathbf{\underline{1}}2,
\]
where the bold underlined letters indicate the inserted letters. The transformed words are cyclically equivalent.

Similarly, the words $12344$ and $42431$ are cyclically equalizable, since they can be transformed into cyclically equivalent words
\[
123\mathbf{\underline{12}}44\mathbf{\underline{24}} \quad \text{and} \quad 424\mathbf{\underline{12}}31\mathbf{\underline{24}},
\]
where the inserted letters are again shown in bold and underlined.

\section{Known Properties of Cyclic Equalizability}
In this section, we review basic properties of cyclic equalizability established by Shinagawa and Nuida~\cite{cyclic}.

The following theorem shows that cyclic equalizability is invariant under simultaneous insertions.

\begin{theorem}[{\cite[Theorem~1]{cyclic}}] \label{shinagawa1}
Let $w_1, \dots, w_k \in \Sigma^*$ be words of equal length, and let $w'_1, \dots, w'_k$ be the words obtained from them by a simultaneous insertion. Then, $w_1, \dots, w_k$ are cyclically equalizable if and only if $w'_1, \dots, w'_k$ are cyclically equalizable.
\end{theorem}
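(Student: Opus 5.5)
The plan is to prove the two directions separately. Both rest on one observation: a simultaneous insertion followed by another simultaneous insertion is again a simultaneous insertion of the original words.

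\emph{The ``if'' direction.} Suppose $w'_1,\dots,w'_k$ are cyclically equalizable, and let $w''_1,\dots,w''_k$ be obtained from them by a simultaneous insertion so that the $w''_i$ are pairwise cyclically equivalent. Write $w'_i = u_0 a_{i,0} u_1 \cdots u_{n-1} a_{i,n-1} u_n$. The positions of $w'_i$ are of two kinds: positions occupied by original letters $a_{i,j}$, and positions occupied by inserted letters from the $u_j$. Because the same $u_j$ sit at the same places in every $w'_i$, this classification of positions is identical for all $i$.

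The second insertion places common strings $v_0,\dots,v_m$ at common positions of the $w'_i$, where $m=|w'_i|$. We then regroup the result around the original letters of $w_i$: each gap between consecutive $a_{i,j}$ and $a_{i,j+1}$ (including the two ends) is now filled by a string that combines the letters of $u_{j+1}$ with the $v_\ell$ inserted in or around that stretch. This combined string is independent of $i$, since it is built only from the $u$'s, the $v$'s, and the common position structure. Hence $w''_i$ is a single simultaneous insertion applied to $w_i$, and so $w_1,\dots,w_k$ are cyclically equalizable. This requires only index bookkeeping: map each slot of $w'_i$ to the corresponding slot of $w_i$.

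\emph{The ``only if'' direction.} Suppose $w_1,\dots,w_k$ are equalizable via an insertion $(u_0,\dots,u_n)$ producing pairwise conjugate words $z_i$. We must show that the $w'_i$, obtained by a possibly different insertion $(t_0,\dots,t_n)$, can also be equalized. The natural idea is to find a common refinement, that is, words $y_i$ obtainable from both $z_i$ and $w'_i$ by simultaneous insertions, chosen so that the $y_i$ are still conjugate.

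A first attempt is to insert $t_j$ next to $u_j$ in each gap, taking $y_i = \cdots a_{i,j-1}\, t_j u_j\, a_{i,j}\cdots$. This is an insertion of $w'_i$ (add $u_j$ after $t_j$) and also an insertion of $z_i$ (add $t_j$ before $u_j$). However, conjugacy of the $z_i$ need not survive this step, because the positions where $t_j$ is placed are different physical places in the conjugated copies.

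This is the main obstacle: conjugacy is not preserved under arbitrary further simultaneous insertions. To handle it, I would exploit the structure of the offsets. If $z_i = z_1^{(r_i)}$, then each $z_i$ is a rotation of the single word $z_1$. I would then try to make the insertion ``rotation-invariant'' by inserting, at every gap, a sufficiently symmetric string. A candidate is to insert a copy of the whole collection $t$-data at every position of period-related classes, or simply to insert enough material that the extra letters themselves form a rotated pattern consistent with every offset $r_i$. If that approach proves too messy, the fallback is to cite that this direction is exactly \cite[Theorem~1]{cyclic} and follow their construction, reproducing only the bookkeeping described above.
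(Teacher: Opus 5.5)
\textbf{Gap: the ``only if'' direction is not proved.} Your ``if'' direction is correct. Composing two simultaneous insertions gives one, since the letters of $w_i$ sit at common positions of $w''_i$ and every other position carries a letter independent of $i$.

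The gap is the ``only if'' direction, which is the real content of the theorem; the paper itself only quotes it from \cite{cyclic} without proof. You correctly see that the naive common refinement fails: for example $ab\sim ba$, but inserting $c$ in the middle gives $acb\not\sim bca$. What follows, however, is only a list of tentative ideas (``a sufficiently symmetric string'', ``period-related classes''). None of them is specified or checked, and the fallback of citing \cite[Theorem~1]{cyclic} is circular. As written, nothing shows that $w'_1,\dots,w'_k$ are equalizable.

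\textbf{The missing idea.} Put the new material next to \emph{every} letter of the already equalized words, i.e.\ apply a letter-independent morphism, which commutes with rotations. Concretely:
\begin{itemize}
\item Let $z_1,\dots,z_k$ be the pairwise conjugate words obtained from the $w_i$, and $(t_0,\dots,t_n)$ the insertion giving $w'_i$.
\item Set $T=t_0t_1\cdots t_n$, define $h(x)=TxT$ for $x\in\Sigma$, and let $y_i=h(z_i)$.
\end{itemize}
Then three things hold:
\begin{itemize}
\item $y_i$ is a simultaneous insertion of $z_i$, hence of $w_i$, because the inserted strings $T,TT,\dots,TT,T$ do not depend on $i$.
\item The $y_i$ are pairwise conjugate: $z_i=\alpha\beta$ and $z_l=\beta\alpha$ give $h(z_i)=h(\alpha)h(\beta)$ and $h(z_l)=h(\beta)h(\alpha)$.
\item Each $t_j$ can be found in the correct gap. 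For $j<n$, use the copy of $T$ immediately before the image of $a_{i,j}$; for $j=n$, use the copy immediately after the image of $a_{i,n-1}$. Each contains $t_j$ as a factor and lies in the gap required for $t_j$.
\end{itemize}
Select these occurrences at the same positions for every $i$. This exhibits $w'_i$ inside $y_i$ at common positions, with all remaining letters independent of $i$. Hence $y_i$ is a simultaneous insertion of $w'_i$, and the $w'_i$ are cyclically equalizable. (Equivalently, induct on single-letter insertions and use $x\mapsto cx$ or $x\mapsto xc$.)

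Your ``rotation-invariant'' intuition points the right way. With $N=|z_i|$, one could instead insert a letter $c$ before every position of a coset $q+H$, where $H\le\mathbb{Z}_N$ is generated by the offsets. Taking $H=\mathbb{Z}_N$, which is exactly the morphism above, removes that bookkeeping.
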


The next theorem gives a complete characterization of cyclic equalizability for two binary words.

\begin{theorem}[{\cite[Theorem~2]{cyclic}}] \label{shinagawa2}
Let $u, v \in \{0,1\}^*$ be binary words of equal length. Then, $u$ and $v$ are cyclically equalizable if and only if $|u|_1 = |v|_1$.
\end{theorem}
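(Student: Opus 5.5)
The plan is to prove the two directions separately. Necessity is a counting argument. Sufficiency is a construction built from elementary ``swap'' steps, with Theorem~\ref{shinagawa1} used to chain those steps together.

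\emph{Necessity.} Suppose $u'$ and $v'$ are obtained from $u$ and $v$ by the same simultaneous insertion and $u'\sim v'$. Every inserted string $u_j$ appears exactly once in each of $u'$ and $v'$. Hence
\[
|u'|_1=|u|_1+\sum_j |u_j|_1 \quad\text{and}\quad |v'|_1=|v|_1+\sum_j |u_j|_1 .
\]
A cyclic shift only permutes letters, so $|u'|_1=|v'|_1$, and therefore $|u|_1=|v|_1$. I regard this direction as routine.

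\emph{Sufficiency, step 1.} Assume $|u|_1=|v|_1$. Then $v$ is a permutation of $u$, so $u$ can be transformed into $v$ by a sequence of adjacent transpositions $01\leftrightarrow 10$. First I will treat the case of a single transposition, $u=p\,ab\,s$ and $v=p\,ba\,s$ with $\{a,b\}=\{0,1\}$. The example $123$ versus $132$ in the paper suggests inserting a copy of the common material between the two swapped letters, together with suitable copies of $p$ and $s$ at the ends, so that both words become of the form $XY$ and $YX$. Concretely, I will try the insertion
\[
u'=p\,a\,(s\,p)\,b\,s,\qquad v'=p\,b\,(s\,p)\,a\,s .
\]
I will then check directly, possibly after adding further symmetric padding at the ends, that $u'$ and $v'$ are rotations of each other. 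This step should be a routine verification.

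\emph{Sufficiency, step 2: chaining, which I expect to be the main obstacle.} Cyclic equalizability is not obviously transitive. My approach is an induction on the number of positions in which $u$ and $v$ differ, or on the number of adjacent transpositions needed. Take the first mismatch $i$, say $u_i=0$ and $v_i=1$, and let $j>i$ be the nearest position at which the mismatch is reversed. I will perform a simultaneous insertion of swap-type gadgets on $(u,v)$, producing $(u',v')$ with $v'$ a cyclic shift of a word $v''$ that agrees with $u'$ in strictly more of the original ``skeleton'' positions. Theorem~\ref{shinagawa1} then says that $(u,v)$ is cyclically equalizable if and only if $(u',v')$ is.

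What remains is a lemma: if $u'$ and $v''$ are cyclically equalizable and $v'\sim v''$, then $u'$ and $v'$ are cyclically equalizable. I would first try to prove this by doubling the insertion, that is, by inserting enough copies of the original material that the rotation offset can be absorbed into the inserted blocks. If that fails, I would instead give a single global construction that handles all the swaps at once. The idea there is to let each $1$ of $u$ correspond to a $1$ of $v$ in order, and to insert between consecutive letters blocks that encode the cyclic sequence of gaps between $1$s. These blocks would be chosen so that both resulting words have the same gap sequence up to rotation, and two binary words with the same gap sequence up to rotation are cyclically equivalent.
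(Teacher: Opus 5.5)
Your necessity argument is correct; it is the same counting argument the paper uses. Your single-swap gadget is also correct. Inserting $sp$ between $a$ and $b$ gives $u'=(pas)(pbs)$ and $v'=(pbs)(pas)$, which are rotations of each other with no extra padding. This is exactly the $123$/$132$ example, and it works over any alphabet.

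\textbf{The gap is Step~2.} It carries essentially all of the difficulty, and you leave it open. With the gadget in hand, what remains is that cyclic equalizability can be chained along a sequence of adjacent transpositions, i.e.\ a transitivity property. Your lemma (if $u'$, $v''$ are cyclically equalizable and $v'\sim v''$, then $u'$, $v'$ are) is a special case of that transitivity. Transitivity is true, but only as a consequence of the theorem you are proving, and nothing you cite gives it:
\begin{itemize}
\item A simultaneous insertion fixes the alignment of the $j$-th original letter of one word with the $j$-th original letter of the other. Replacing $v''$ by a rotation $v'$ destroys that alignment, so an insertion witnessing equalizability of $(u',v'')$ tells you nothing about $(u',v')$.
\item Theorem~\ref{shinagawa1} does not help. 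A common insertion applied to a cyclically equivalent pair yields only an equalizable pair, not an equivalent one: $001\sim 010$, but prefixing $1$ gives $1001\not\sim 1010$. So the natural attempt to compose witnesses leads back to the same lemma.
\item The induction step is not well defined. The Step~1 gadget works only because the inserted material $sp$ copies context that $u$ and $v$ share. Once $u$ and $v$ differ outside the swapped pair, there is no common context to copy. You do not say what is inserted, or in what sense $v''$ agrees with $u'$ ``in more skeleton positions''.
\item Both fallbacks are statements of intent, not constructions. In the gap-sequence idea, the whole difficulty is that the inserted blocks are common to both words while the $1$s of $u$ and $v$ sit at different skeleton positions. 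You give no choice of blocks that makes the two cyclic gap sequences agree.
\end{itemize}

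The paper obtains this statement as the binary case of Theorem~\ref{mainthm}, and it never needs transitivity, because it realizes the whole rearrangement in one construction. First it relabels occurrences so that $u=0\,1\cdots(n-1)$ and $v=\pi(0)\cdots\pi(n-1)$ (Lemma~\ref{reduction}). For each cycle of $\pi$ it builds words of length $n^2$ in which original position $t$ sits in Block $t$, and the $k$-th element of the cycle sits in Group $k$ of the step-$(n+1)$ reading order. It fills all other positions so that the two step-$(n+1)$ readings are shifts of each other by one, and converts this into a genuine rotation via Lemma~\ref{step}. Finally it merges the cycles by columnwise interleaving (Lemma~\ref{interleaving}). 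To complete your proof you need either a global construction of this kind or an independent proof of transitivity. The local gadget together with Theorem~\ref{shinagawa1} does not suffice.
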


\section{Cyclic Equalizability of Two Words}
In this section, we prove a complete characterization of cyclic equalizability for two words over an arbitrary finite alphabet.

\begin{theorem} \label{mainthm}
Let $\Sigma$ be a finite alphabet and let $u,v \in \Sigma^*$ be words of equal length. Then, $u$ and $v$ are cyclically equalizable if and only if they have the same Parikh vector.
\end{theorem}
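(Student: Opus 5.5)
Necessity is direct. Suppose $u,v$ are cyclically equalizable via a simultaneous insertion of $u_0,\dots,u_n$. Then
\[
\Psi(u') = \Psi(u) + \sum_j \Psi(u_j) \quad\text{and}\quad \Psi(v') = \Psi(v) + \sum_j \Psi(u_j).
\]
A cyclic shift permutes letters, so $u' \sim v'$ gives $\Psi(u') = \Psi(v')$. Cancelling the common inserted part yields $\Psi(u) = \Psi(v)$.

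For sufficiency, I plan to reduce to adjacent transpositions. Two words with the same Parikh vector are connected by a chain of adjacent transpositions $w_0 = u, w_1, \dots, w_m = v$, where each step has the form $w_{t} = x\,ab\,y$ and $w_{t+1} = x\,ba\,y$. Each such step is cyclically equalizable by Theorem~\ref{shinagawa1}. The words $ab$ and $ba$ are already cyclically equivalent (offset $1$), so the pair $(ab, ba)$ is cyclically equalizable (insert nothing). The pair $(x\,ab\,y,\ x\,ba\,y)$ is obtained from $(ab, ba)$ by the simultaneous insertion $u_0 = x$, $u_1 = \varepsilon$, $u_2 = y$, so by Theorem~\ref{shinagawa1} it is cyclically equalizable as well.

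It remains to glue the chain together, i.e.\ to show that cyclic equalizability of pairs is transitive. This is the step I expect to be the main obstacle, because an insertion that equalizes $(u,w)$ is unrelated to one that equalizes $(w,v)$. Moreover, inserting further letters at common positions does not preserve cyclic equivalence of the already equalized words, since the offset shifts the positions.

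My first attempt is to use Theorem~\ref{shinagawa1} to transport the second pair into the first pair's coordinates.
\begin{enumerate}
\item Let $I$ equalize $(u,w)$, giving $u' \sim w'$, and apply the same $I$ to $v$ to obtain $v'$.
\item By Theorem~\ref{shinagawa1}, $(w', v')$ is still cyclically equalizable, say via $J$, giving $w'' \sim v''$.
\item The difficulty is then to show that $J$ applied to $u'$, or $J$ suitably rotated by the offset between $u'$ and $w'$, yields $u'' \sim w''$.
\end{enumerate}

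If this fails, I would avoid transitivity and build the insertion for the whole chain at once by induction on the number $m$ of transpositions. Apply Theorem~\ref{shinagawa1} to pass from $(u,v)$ to an inserted pair $(u',v')$ in which $v'$ is a single adjacent transposition away from some cyclic shift of $u'$. This uses the freedom to insert copies of the letters being transposed, as in the example $12344 \to 123\,12\,44\,24$.

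A second fallback is induction on $|\Sigma|$ with Theorem~\ref{shinagawa2} as the base case. Here I would merge two letters, equalize the resulting coarser words, and then repair the merged letters by further insertions.
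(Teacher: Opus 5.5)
Your necessity argument is correct and is the paper's. You are also right that each adjacent-transposition pair $(x\,ab\,y,\ x\,ba\,y)$ is cyclically equalizable by Theorem~\ref{shinagawa1}. But the gluing step is not a technicality. Those pairs are equalizable for trivial reasons, so transitivity of cyclic equalizability is \emph{equivalent} to the sufficiency direction. Your reduction has therefore moved the whole theorem into a step you leave unproved.

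Your concrete attempt does not close it. Inserting $J$ into $u'$ at the same positions as in $w'$ does not preserve $u'\sim w'$, because these words are aligned only up to the offset $r$; as you noted, for distinct letters $ab\sim ba$ but $cab\not\sim cba$. Inserting $J$ rotated by $r$ into $u'$ does give $u''\sim w''\sim v''$. But then $u''$ and $v''$ come from $u'$ and $v'$ by insertions at \emph{different} positions, which is not a simultaneous insertion and proves nothing about $(u,v)$.

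The fallbacks are only sketches. For the induction on $m$, you give no mechanism by which an insertion lowers the number of transpositions. Even at the last step, where $v'$ is one transposition away from $(u')^{(r)}$, you only know that the pair $((u')^{(r)},v')$ is equalizable. You have no argument that equalizability of a pair survives rotating just one of the words, which is the same alignment problem. The induction on $|\Sigma|$ merges letters in the wrong direction. Equalizing the finer words gives equalizability of the coarser ones (this is how Lemma~\ref{reduction} is used), not conversely, so the ``repair'' step is again the entire problem.

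The missing idea is a single global construction that handles the whole permutation at once, with no chaining. The paper splits repeated letters and normalizes to $u=0\,1\cdots(n-1)$ and $v=\pi(0)\cdots\pi(n-1)$. For a single cycle it works in length $n^2$ and reads with step $p=n+1$, so by Lemma~\ref{step} it suffices to make $R_p(v')$ the shift of $R_p(u')$ by one. Blocks ($n$ consecutive positions) and groups ($n$ consecutive positions in reading order) meet in exactly one position $f(t,g)$. At $f(\pi^k(0),k)$ it puts $\pi^k(0)$ in $u'$ and $\pi^{k+1}(0)$ in $v'$, and it fills every other position by propagating the previous $b$-letter ($a_i=b_i=b_{i-1}$). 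This forces $b_i=a_{i+1}$ everywhere, while each block holds exactly one distinguished position carrying $t$ in $u'$ and $\pi(t)$ in $v'$. Several cycles are treated separately with the common offset $p$ and then merged by columnwise interleaving (Lemma~\ref{interleaving}). Interleaving preserves both the cyclic equivalence and the simultaneous-insertion structure.
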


We first prove the necessity. Suppose $u$ and $v$ are cyclically equalizable. Simultaneous insertions add the same letters to both words at each step, and hence preserve Abelian equivalence. Since cyclically equivalent words are Abelian equivalent, it follows that the original words $u$ and $v$ are also Abelian equivalent.

It remains to prove the sufficiency, namely that if two words $u,v \in \Sigma^*$ have the same Parikh vector, then they are cyclically equalizable.

\subsection{Reduction to Distinct Letters and Normalization}
We begin with a sequence of reductions that simplify the problem so that it suffices to prove when $u = 0\,1\,\cdots\,(n-1)$ and $v = \pi(0)\,\pi(1)\,\cdots\,\pi(n-1)$ for an arbitrary permutation $\pi \in S_n$.

\begin{lemma} \label{reduction}
It suffices to prove the statement for words in which all letters are pairwise distinct.
\end{lemma}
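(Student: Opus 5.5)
The plan is to reduce Abelian-equivalent words with repeated letters to words over a larger alphabet with all letters distinct. Then, from an equalizing insertion for the distinct-letter words, we recover one for the original words by applying a letter-to-letter morphism that merges the distinct copies back.

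Suppose $u,v\in\Sigma^n$ have the same Parikh vector. Since $\Psi(u)=\Psi(v)$, there is a bijection $\sigma$ of positions $\{0,\dots,n-1\}$ with $v_{\sigma(j)}=u_j$ for every $j$; fix one, for instance by matching the $t$-th occurrence of each letter $a$ in $u$ with the $t$-th occurrence of $a$ in $v$. Introduce a fresh alphabet $\Gamma=\{c_0,\dots,c_{n-1}\}$ and set
\[
\hat u=c_0c_1\cdots c_{n-1},\qquad \hat v=c_{\sigma^{-1}(0)}c_{\sigma^{-1}(1)}\cdots c_{\sigma^{-1}(n-1)}.
\]
Both words consist of pairwise distinct letters, and they are Abelian equivalent, since each is a permutation of $\Gamma$. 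Define the morphism $h:\Gamma^*\to\Sigma^*$ by $h(c_j)=u_j$. Then $h(\hat u)=u$. Also, the letter of $\hat v$ at position $i$ is $c_{\sigma^{-1}(i)}$, and $h(c_{\sigma^{-1}(i)})=u_{\sigma^{-1}(i)}=v_i$, so $h(\hat v)=v$.

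By the assumed distinct-letter case, there are insertion words $\hat u_0,\dots,\hat u_n\in\Gamma^*$ such that the words $\hat u'=\hat u_0\hat u^{}_{[0]}\hat u_1\cdots$ and $\hat v'$, obtained from $\hat u$ and $\hat v$ by this common simultaneous insertion, satisfy $\hat v'=(\hat u')^{(r)}$ for some $r$. Now apply $h$ and set $u_j:=h(\hat u_j)\in\Sigma^*$. Because $h$ is a morphism that sends single letters to single letters, two facts follow:
\begin{itemize}
\item $h(\hat u')$ is exactly the word obtained from $u$ by inserting $u_0,\dots,u_n$ at the same positions, and similarly $h(\hat v')$ is obtained from $v$ by inserting $u_0,\dots,u_n$. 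So both images come from one simultaneous insertion.
\item $h$ commutes with cyclic shifts, since it preserves positions and lengths. Hence $h(\hat v')=h(\hat u')^{(r)}$.
\end{itemize}
Therefore $u$ and $v$ are cyclically equalizable, which is what is needed.

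The only point requiring care is that $h$ maps each letter to exactly one letter, so that positions, and hence both the insertion slots and the cyclic offset, are preserved. A general morphism could send a letter to a longer word and destroy this alignment; a letter-to-letter morphism does not. The distinct-letter case also applies here, because $\Gamma$ is finite and the words have equal length. Renaming $c_j\mapsto j$ then gives exactly the normalized form $u=0\,1\cdots(n-1)$ and $v=\pi(0)\cdots\pi(n-1)$ with $\pi=\sigma^{-1}$.
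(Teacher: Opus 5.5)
Your proposal is correct and follows the paper's approach: you relabel occurrences with distinct fresh letters consistently in $u$ and $v$, then collapse them back. Where the paper only says ``identifying all letters back,'' you justify that step explicitly through the letter-to-letter morphism $h$, which preserves both the simultaneous insertion and the cyclic shift. The only blemish is notational: you use $u_j$ both for the $j$-th letter of $u$ and for the inserted word $h(\hat u_j)$, so rename the latter.
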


\begin{proof}
Suppose $u$ and $v$ are words of size $n$ with the same Parikh vector. For each letter $x \in \Sigma$ appearing $k \geq 2$ times in $u$ (and hence also in $v$), replace its occurrences by $k$ distinct letters $x_1, \dots, x_k$ arbitrarily. Perform this replacement in both $u$ and $v$ for every letter appearing multiple times. For instance, $u = acbcac$ and $v = cbaacc$ become
\[
    \tilde{u} = a_1 c_1 b c_2 a_2 c_3 \quad \text{and} \quad \tilde{v} = c_1 b a_1 a_2 c_2 c_3.
\]

This yields two new words $\tilde{u}, \tilde{v}$ of length $n$ over a larger alphabet, in which all letters are pairwise distinct. By construction, $\tilde{u}$ and $\tilde{v}$ are permutations of each other.

If $\tilde{u}$ and $\tilde{v}$ are cyclically equalizable, then identifying all letters $a_1, \dots, a_k$ back to $a$ shows that $u$ and $v$ are cyclically equalizable. Therefore, it suffices to consider only the case where all letters are distinct.
\end{proof}

By Lemma~\ref{reduction}, we may assume that $u$ and $v$ consist of $n$ distinct letters. Relabel the alphabet so that the letters of $u$ are $0,1,\dots,n-1$ in this order. Since $v$ is a permutation of the same letters, it can be written as $\pi(0)\pi(1)\cdots\pi(n-1)$ for some permutation $\pi \in S_n$.

Since cyclic equalizability is invariant under relabeling of the alphabet, we obtain the following normalization.

\begin{corollary} \label{normalize}
It suffices to consider the case where
\[
u = 0\,1\,\cdots\,(n-1) \quad \text{and} \quad
v = \pi(0)\,\pi(1)\,\cdots\,\pi(n-1)
\]
for an arbitrary permutation $\pi \in S_n$.
\end{corollary}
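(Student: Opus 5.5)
The plan is to combine Lemma~\ref{reduction} with the observation that cyclic equalizability is preserved under any bijective renaming of the alphabet. The only point needing care is that the renaming must act on \emph{all} letters, including the inserted ones. Everything else is bookkeeping, and I expect no real obstacle.

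First, by Lemma~\ref{reduction}, it suffices to treat $u,v$ of length $n$ whose letters are pairwise distinct and that have the same Parikh vector. Hence $u$ and $v$ use exactly the same set $L=\{a_0,\dots,a_{n-1}\}$ of $n$ letters, where $u=a_0a_1\cdots a_{n-1}$. Enlarging the alphabet if necessary, I may assume $\Sigma\supseteq\{0,1,\dots,n-1\}$. I then fix a bijection $\sigma:\Sigma\to\Sigma$ with $\sigma(a_i)=i$ for $0\le i\le n-1$, extended to the remaining letters in any bijective way. I extend $\sigma$ letterwise to a monoid automorphism of $\Sigma^*$.

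Next I check two commutation properties, both immediate from the letterwise definition:
\[
\sigma\bigl(w^{(r)}\bigr)=\sigma(w)^{(r)},
\]
\[
\sigma\bigl(u_0a_0u_1\cdots u_{n-1}a_{n-1}u_n\bigr)=\sigma(u_0)\sigma(a_0)\sigma(u_1)\cdots\sigma(u_n).
\]
The second says that $\sigma$ maps a simultaneous insertion with strings $u_j$ to the simultaneous insertion with strings $\sigma(u_j)$. Hence, if $\sigma(u),\sigma(v)$ are cyclically equalizable via insertions $u'_j$ and offset $r$, then applying $\sigma^{-1}$ shows that $u,v$ are cyclically equalizable via insertions $\sigma^{-1}(u'_j)$ and the same offset $r$. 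The converse holds symmetrically, though only this direction is needed.

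Finally, $\sigma(u)=0\,1\cdots(n-1)$. Writing $v=b_0\cdots b_{n-1}$, each $b_j$ lies in $L$, so $\sigma(b_j)\in\{0,\dots,n-1\}$. Since the $b_j$ are distinct, the map $\pi(j):=\sigma(b_j)$ is a permutation in $S_n$, and $\sigma(v)=\pi(0)\pi(1)\cdots\pi(n-1)$. Since $v$ was arbitrary with the same Parikh vector as $u$, every permutation can arise. Therefore proving cyclic equalizability for all pairs $0\,1\cdots(n-1)$ and $\pi(0)\cdots\pi(n-1)$ with $\pi\in S_n$ proves the sufficiency direction of Theorem~\ref{mainthm}, which is the claim of Corollary~\ref{normalize}.
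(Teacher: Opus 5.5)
Your proposal is correct and is essentially the paper's argument. You invoke Lemma~\ref{reduction} to get pairwise distinct letters, then rename letters so that $u$ becomes $0\,1\cdots(n-1)$ and $v$ becomes $\pi(0)\cdots\pi(n-1)$, using that a letterwise renaming commutes with simultaneous insertion and cyclic shift; the paper only asserts this, while you verify it. One point left implicit (also glossed over in the paper) is that after ``enlarging the alphabet'' the pulled-back insertions $\sigma^{-1}(u'_j)$ may use letters outside the original $\Sigma$. This is harmless: any letter-to-letter map that is the identity on the original alphabet preserves simultaneous insertions and cyclic equivalence, so you can project those letters back into $\Sigma$.
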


\subsection{Useful Lemmas}
We present two lemmas that will be used in our construction in the next two subsections.

The first lemma shows that cyclic equivalence is preserved under reading a word with a step size that is coprime with the length.

\begin{definition} \label{stepdef}
Let $w = a_0a_1\cdots a_{n-1} \in \Sigma^n$ and let $p$ be an integer such that $\gcd(p,n)=1$, define the \emph{reading with step size $p$} of $w$ to be the word
\[
R_p(w)=a_0\,a_p\,a_{2p}\,\cdots\,a_{(n-1)p},
\]
where the indices are taken modulo $n$.
\end{definition}

\begin{lemma} \label{step}
Let $u,v \in \Sigma^n$ and let $p$ be an integer such that $\gcd(p,n)=1$. If $R_p(u)$ and $R_p(v)$ are cyclically equivalent with offset $d$, then $u$ and $v$ are cyclically equivalent with offset $dp$.
\end{lemma}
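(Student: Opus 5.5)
This is a direct index computation, so I will unwind the definitions and compare letters position by position. Write $u=a_0a_1\cdots a_{n-1}$ and $v=b_0b_1\cdots b_{n-1}$, with all indices taken modulo $n$. By Definition~\ref{stepdef}, the $j$-th letter of $R_p(u)$ is $a_{jp}$ and the $j$-th letter of $R_p(v)$ is $b_{jp}$. With the paper's convention, the hypothesis that $R_p(u)$ and $R_p(v)$ are cyclically equivalent with offset $d$ means $R_p(v)=R_p(u)^{(d)}$. Comparing the $j$-th letters, this says
\[
b_{jp}=a_{(j+d)p}=a_{jp+dp}\qquad\text{for all } j\in\mathbb{Z}_n .
\]

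The key step is to turn this into a statement about every position $i$, not just the positions of the form $jp$. Since $\gcd(p,n)=1$, multiplication by $p$ is a bijection of $\mathbb{Z}_n$. So every $i\in\mathbb{Z}_n$ can be written as $i=jp$ for a unique $j$, namely $j=ip^{-1}\bmod n$. Substituting into the identity above gives $b_i=a_{i+dp}$ for all $i\in\mathbb{Z}_n$. This is exactly $v=u^{(dp)}$, so $u$ and $v$ are cyclically equivalent with offset $dp$ (taken modulo $n$).

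There is no real obstacle. The only points that need care are consistency of the offset convention (that $v=u^{(r)}$ means $b_i=a_{i+r}$) and checking that the index arithmetic in $R_p$ is really done modulo $n$, so that the shift by $d$ in the step-$p$ reading becomes a shift by $dp$ in the original word. The coprimality assumption enters only in the reindexing step, where it guarantees that $R_p$ is a genuine rearrangement of $w$ covering all positions. The case $n=1$ is trivial.
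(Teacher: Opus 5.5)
Your proposal is correct and follows essentially the same route as the paper: the paper also compares letters via $b_{\varphi(i)} = a_{\varphi(i+d)} = a_{\varphi(i)+dp}$ with $\varphi(i)=ip \bmod n$. Like you, it then uses that $\varphi$ is a bijection of $\mathbb{Z}_n$ when $\gcd(p,n)=1$ to conclude $b_j = a_{j+dp}$ for all $j$.
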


\begin{proof}
Let $u=a_0a_1\cdots a_{n-1}$ and $v=b_0b_1\cdots b_{n-1}$. Define a map $\varphi:\mathbb{Z}_n \to \mathbb{Z}_n$ by $\varphi(i)=ip \pmod n$ for every $i \in \mathbb{Z}_n$. Since $\gcd(n,p)=1$, the map $\varphi$ is a permutation of $\mathbb{Z}_n$.

We have
\[
R_p(u)=a_{\varphi(0)}a_{\varphi(1)}\cdots a_{\varphi(n-1)}
\quad \text{and} \quad
R_p(v)=b_{\varphi(0)}b_{\varphi(1)}\cdots b_{\varphi(n-1)}.
\]

Suppose $R_p(u)$ and $R_p(v)$ are cyclically equivalent with offset $d$. Then, $b_{\varphi(i)} = a_{\varphi(i+d)}$ for every $i \in \mathbb{Z}_n$. Since
\[
\varphi(i+d) \equiv (i+d)p \equiv ip + dp \equiv \varphi(i) + dp \pmod n,
\]
it follows that $b_{\varphi(i)} = a_{\varphi(i)+dp}$ for every $i \in \mathbb{Z}_n$, where the indices are taken modulo $n$.

Since $\varphi$ is a permutation of $\mathbb{Z}_n$, this implies $b_j = a_{j+dp}$ for every $j \in \mathbb{Z}_n$. Hence, $u$ and $v$ are cyclically equivalent with offset $dp$.
\end{proof}

The next lemma shows that cyclic equivalence is preserved under columnwise interleaving, provided that all component words have the same length and offset.

\begin{definition} \label{interleavingdef}
Let $w_1,\dots,w_m \in \Sigma^n$ be words of equal length $n$, where $w_i = w_{i,0}w_{i,1}\cdots w_{i,n-1}$ for each $i=1,2,\dots,m$. Define the \emph{columnwise interleaved word} of $w_1,\dots,w_m$ to be the word
\[
I(w_1,\cdots,w_m) = w_{1,0}w_{2,0}\cdots w_{m,0}\,
    w_{1,1}w_{2,1}\cdots w_{m,1}\,
    \cdots\,
    w_{1,n-1}w_{2,n-1}\cdots w_{m,n-1}.
\]
\end{definition}

\begin{lemma} \label{interleaving}
Let $u_1,\dots,u_m,v_1,\dots,v_m \in \Sigma^n$ be words of equal length $n$, where $u_i = u_{i,0}u_{i,1}\cdots u_{i,n-1}$ and $v_i = v_{i,0}v_{i,1}\cdots v_{i,n-1}$ for each $i=1,2,\dots,m$. Suppose for every $i \in \{1,\dots,m\}$, the word $v_i$ is a cyclic shift of $u_i$ by the same offset $\delta$, i.e. $v_{i,j} = u_{i,j+\delta}$ for all $j \in \mathbb{Z}_n$, where $j+\delta$ is taken modulo $n$. Then, the words $U = I(u_1,\cdots,u_m)$ and $V = I(v_1,\cdots,v_m)$ are cyclically equivalent.
\end{lemma}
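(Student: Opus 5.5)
The natural approach is to compute the positions of letters directly. Write $U = I(u_1,\dots,u_m)$ and $V = I(v_1,\dots,v_m)$, both of length $mn$. The letter $u_{i,j}$ sits at position $mj + (i-1)$ of $U$, for $i \in \{1,\dots,m\}$ and $j \in \mathbb{Z}_n$. In the same way, $v_{i,j}$ sits at position $mj+(i-1)$ of $V$. Every position $t \in \{0,\dots,mn-1\}$ can be written uniquely as $t = mj + (i-1)$ with $0 \le j \le n-1$ and $1 \le i \le m$. So $U_t = u_{i,j}$ and $V_t = v_{i,j}$.

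I claim that $V = U^{(m\delta)}$, that is, $V_t = U_{t+m\delta}$ with indices taken modulo $mn$. The key step is to check that adding $m\delta$ modulo $mn$ to a position $t = mj+(i-1)$ keeps the residue class $i-1$ modulo $m$. It also replaces $j$ by $j+\delta$ modulo $n$. This holds because
\[
mj + (i-1) + m\delta \equiv m\bigl((j+\delta) \bmod n\bigr) + (i-1) \pmod{mn}.
\]
Here we use that $m\bigl((j+\delta) - ((j+\delta) \bmod n)\bigr)$ is a multiple of $mn$. The right-hand side lies in $\{0,\dots,mn-1\}$, so it is the canonical representative. Hence $U_{t+m\delta} = u_{i,j+\delta} = v_{i,j} = V_t$ by the hypothesis, and the lemma follows with offset $m\delta$.

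There is no real obstacle here. The only point needing care is the wraparound when $j+\delta \ge n$. It is handled by the congruence above, which shows that the block structure of length-$m$ columns is compatible with reduction modulo $mn$. I would state the stronger conclusion, with the explicit offset $m\delta$, since the later constructions will likely combine this lemma with Lemma~\ref{step}, and there the exact offset matters.
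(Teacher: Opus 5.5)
Your proof is correct and follows the paper's argument essentially verbatim: you place $u_{i,j}$ at position $mj+(i-1)$ and note that adding $m\delta$ modulo $mn$ fixes the residue $i-1$ modulo $m$ and advances $j$ by $\delta$ modulo $n$, so $V = U^{(m\delta)}$. Your relation $V_t = U_{t+m\delta}$ is the one that actually follows from the hypothesis. The paper's final line writes it as $a_t = b_{t+m\delta}$, with the roles of $U$ and $V$ swapped, but this is harmless since the lemma only claims cyclic equivalence.
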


\begin{proof}
Let $U = a_0a_1\cdots a_{mn-1}$ and $V = b_0b_1\cdots b_{mn-1}$. Under the linear indexing, the letter $u_{i,j}$ (resp. $v_{i,j}$) occurs in $U$ (resp. $V$) at position $mj + (i-1)$, i.e. $a_{mj + (i-1)} = u_{i,j}$ and $b_{mj + (i-1)} = v_{i,j}$ for every $i \in \{1,\dots,m\}$ and $j \in \mathbb{Z}_n$.

By assumption, $v_{i,j} = u_{i,j+\delta}$ for every $i,j$. Therefore, the letter at position $mj+(i-1)$ in $V$ is equal to the letter at position $m(j+\delta)+(i-1) \pmod{mn}$ in $U$. Since
\[
m(j+\delta)+(i-1) \equiv mj+(i-1)+m\delta \pmod{mn},
\]
it follows that $a_t = b_{t+m\delta}$ for every $t \in \mathbb{Z}_{mn}$, where the indices are taken modulo $mn$. Hence, $U$ and $V$ are cyclically equivalent.
\end{proof}


\subsection{Construction for Single-Cycle Case}
Recall that $u = 0\,1\,\cdots\,(n-1)$ and $v = \pi(0)\,\pi(1)\,\cdots\,\pi(n-1)$. We first consider the case where $\pi \in S_n$ consists of a single cycle. Thus, for each $t \in \mathbb{Z}_n$, there exists a unique $k \in \mathbb{Z}_n$ such that $t=\pi^k(0)$.

We will construct words $u'=u'_0u'_1\cdots u'_{n^2-1}$ and $v'=v'_0v'_1\cdots v'_{n^2-1}$ such that
\begin{enumerate}
    \item $u'$ and $v'$ can be obtained from $u$ and $v$, respectively, by a simultaneous insertion, and
    \item $u'$ and $v'$ are cyclically equivalent with offset $n+1$.
\end{enumerate}

At a high level, we partition $u'$ and $v'$ into $n$ blocks of $n$ consecutive positions. We perform insertions so that the $i$-th position of $u$ and $v$ is placed in Block $i$ of $u'$ and $v'$, for each $i \in \mathbb{Z}_n$. See Example~\ref{example1} for an illustration of this construction.

\subsubsection{Blocks and Groups}
Let $p=n+1$. Since $\gcd(p,n^2)=1$, we may consider the reading with step size $p$ on words of length $n^2$.

We divide the positions of $u'$ and $v'$ into $n$ consecutive \emph{blocks} of size $n$ according to their original order, and into $n$ consecutive \emph{groups} of size $n$ according to their reading order with step size $p$.

\begin{definition}
We partition the positions in $\mathbb{Z}_{n^2}$ into $n$ blocks and $n$ groups. For $t,g \in \mathbb{Z}_n$:
\begin{itemize}
    \item Block $t$ consists of positions
    \[
    tn,tn+1,\dots,tn+(n-1).
    \]
    \item Group $g$ consists of positions
    \[
    \varphi(gn), \varphi(gn+1), \dots, \varphi(gn+n-1),
    \]
    where $\varphi(i)=ip \pmod{n^2}$ for all $i \in \mathbb{Z}_{n^2}$ and $p = n+1$.
\end{itemize}
\end{definition}

Next, we show that each pair of a block and a group uniquely determines a position.

\begin{lemma}
For each $t,g \in \mathbb{Z}_n$, there exists a unique position in $\mathbb{Z}_{n^2}$, denoted by $f(t,g)$, that lies in Block $t$ and Group $g$. Moreover, $f$ is a bijection from $\mathbb{Z}_n^2$ to $\mathbb{Z}_{n^2}$.
\end{lemma}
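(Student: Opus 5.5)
We compute group membership explicitly. Write a position $x = gn + s$ in the reading order, with $g, s \in \mathbb{Z}_n$; Group $g$ is then the set of values $\varphi(gn+s)$ for $s \in \mathbb{Z}_n$. Since $p = n+1$,
\[
\varphi(gn+s) \equiv (gn+s)(n+1) \equiv gn^2 + gn + sn + s \equiv (g+s)n + s \pmod{n^2}.
\]
Reducing $g+s$ modulo $n$ gives the value $((g+s) \bmod n)\,n + s$, which lies in $\{0,\dots,n^2-1\}$. It is the position with block index $(g+s) \bmod n$ and offset $s$ within that block. So the element of Group $g$ indexed by $s$ lies in Block $g+s \pmod n$, at offset $s$.

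For existence and uniqueness, fix $t, g \in \mathbb{Z}_n$. A position in Group $g$ lies in Block $t$ exactly when its index $s$ satisfies $g + s \equiv t \pmod n$. This forces the unique value $s = t - g \bmod n$. Hence
\[
f(t,g) = tn + ((t-g) \bmod n)
\]
is the unique position lying in both Block $t$ and Group $g$. Uniqueness also uses that the $n$ values $\varphi(gn+s)$ for $s \in \mathbb{Z}_n$ are distinct, which holds because $\varphi$ is injective: $\gcd(n+1, n^2) = 1$.

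For bijectivity, $f$ maps the $n^2$-element set $\mathbb{Z}_n^2$ into the $n^2$-element set $\mathbb{Z}_{n^2}$, so injectivity suffices. Suppose $f(t,g) = f(t',g')$. Comparing block indices gives $t = t'$, and comparing offsets gives $t - g \equiv t - g' \pmod n$, so $g = g'$. Equivalently, since the blocks partition $\mathbb{Z}_{n^2}$ and the groups partition it too (because $\varphi$ is a permutation), each position lies in exactly one block and exactly one group, and that pair is the preimage.

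The only delicate step is the modular reduction: we must confirm that $(g+s)n + s$ reduced modulo $n^2$ really has block index $(g+s) \bmod n$ and offset $s$. This follows because $0 \le s < n$ and $n \cdot (\text{anything} \bmod n) + s < n^2$.
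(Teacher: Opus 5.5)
Your proof is correct and follows essentially the same route as the paper: both compute $\varphi(gn+s)\equiv (g+s)n+s \pmod{n^2}$, deduce that $s\equiv t-g \pmod n$ is forced, and obtain bijectivity from injectivity plus $|\mathbb{Z}_n^2|=|\mathbb{Z}_{n^2}|$. Your explicit formula $f(t,g)=tn+((t-g)\bmod n)$ makes injectivity immediate, whereas the paper appeals to blocks and groups both partitioning $\mathbb{Z}_{n^2}$; a minor quibble is that injectivity of $\varphi$ is not actually needed for the uniqueness step, since the forced index $s$ already determines the position.
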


\begin{proof}
For any $t,g \in \mathbb{Z}_n$, let $i \equiv t-g \pmod n$, where $i \in \mathbb{Z}_n$. We have
\[
\varphi(gn+i) \equiv (gn+i)(n+1) \equiv gn^2 + (g+i)n + i \equiv (g+i)n + i \pmod{n^2}.
\]
Therefore,
\[
\left\lfloor \frac{\varphi(gn+i)}{n} \right\rfloor \equiv g+i \equiv t \pmod n,
\]
so $\varphi(gn+i)$ lies in Block $t$. By construction, it also lies in Group $g$, so such a position exists.

For uniqueness, suppose $\varphi(gn+i')$ also lies in Block $t$, where $i' \in \mathbb{Z}_n$. Then, $g+i' \equiv t \pmod n$, which implies $i' \equiv t-g \equiv i \pmod n$ and thus $i'=i$. Therefore, the position is unique.

Hence, $f(t,g)$ is well-defined. Since blocks partition $\mathbb{Z}_{n^2}$ and groups partition $\mathbb{Z}_{n^2}$, distinct pairs $(t,g)$ correspond to distinct positions, so $f$ is injective. As $|\mathbb{Z}_n^2| = |\mathbb{Z}_{n^2}| = n^2$, it follows that $f$ is a bijection.
\end{proof}

\subsubsection{Filling Strategy}
We construct words $u'$ and $v'$ from $u$ and $v$ by a simultaneous insertion so that $u'$ and $v'$ are cyclically equivalent with offset $p$.

For each position $i \in \mathbb{Z}_{n^2}$, the letters $u'_i$ and $v'_i$ either correspond to original letters $u_j$ and $v_j$ for some $j \in \mathbb{Z}_n$, or arise from the inserted letters. We call $i$ a \emph{distinguished position} in the former case, and a \emph{non-distinguished position} in the latter case.

We first designate exactly $n$ distinguished positions. This determines the letters $u'_i$ and $v'_i$ at all distinguished positions. The remaining positions are filled so as to ensure that $u'$ and $v'$ are cyclically equivalent with offset $p$.

\subsubsection{Filling Distinguished Positions}
For each $k \in \mathbb{Z}_n$, we designate $f(\pi^k(0),\,k)$ to be a distinguished position. At this position, we set the letters
\[
u'_{f(\pi^k(0),\,k)}=\pi^k(0) \quad \text{and} \quad
v'_{f(\pi^k(0),\,k)}=\pi^{k+1}(0).
\]
Since $\pi^0(0),\pi^1(0),\dots,\pi^{n-1}(0)$ are pairwise distinct, there is exactly one distinguished position in each block (and also exactly one in each group).

\subsubsection{Filling Non-Distinguished Positions}
We now fill the remaining positions by following the reading order $\varphi(0),\varphi(1),\dots,\varphi(n^2-1)$, skipping the distinguished positions already filled.

For convenience, define $a_i := u'_{\varphi(i)}$ and $b_i := v'_{\varphi(i)}$. Note that the letters $a_0$ and $b_0$ are already filled, since $\varphi(0)=0=f(0,0)$ is a distinguished position.

For each $i=1,2,\dots,n^2-1$, if $\varphi(i)$ is a non-distinguished position, we set
\[
a_i = b_{i-1}  \quad \text{and} \quad
b_i = b_{i-1}.
\]

\begin{lemma} \label{main1}
The words $R_p(u')$ and $R_p(v')$ are cyclically equivalent with offset $1$.
\end{lemma}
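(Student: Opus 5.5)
The plan is to verify directly that $b_i = a_{i+1}$ for every $i \in \mathbb{Z}_{n^2}$, with indices taken modulo $n^2$. Since $R_p(u') = a_0a_1\cdots a_{n^2-1}$ and $R_p(v') = b_0 b_1\cdots b_{n^2-1}$, this identity is exactly the statement that $R_p(v') = R_p(u')^{(1)}$. I will first pin down where the distinguished positions fall in the reading order. Then I will check the identity separately according to whether $\varphi(i+1)$ is distinguished.

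\textbf{Distinguished reading indices.} By the proof of the lemma defining $f$, we have $f(t,g) = \varphi(gn + i)$ with $i \equiv t-g \pmod n$. Hence the distinguished position $f(\pi^k(0),k)$ has reading index
\[
j_k = kn + r_k, \qquad r_k \in \mathbb{Z}_n,\ r_k \equiv \pi^k(0)-k \pmod n .
\]
So there is exactly one distinguished reading index in each group $\{kn,\dots,kn+n-1\}$, which gives $j_0 < j_1 < \cdots < j_{n-1}$. In particular, $j_0 = 0$ because $\pi^0(0)=0$. This makes the left-to-right filling well defined. At $j_k$ we have $a_{j_k} = \pi^k(0)$ and $b_{j_k} = \pi^{k+1}(0)$.

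\textbf{Constancy of $b$ between distinguished indices.} The rule $b_i = b_{i-1}$ at non-distinguished indices gives, by induction along the reading order, that $b_i = b_{j_k} = \pi^{k+1}(0)$ for every $i$ with $j_k \le i < j_{k+1}$. Here we set $j_n := n^2$ for the last stretch $k=n-1$.

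\textbf{Checking $b_i = a_{i+1}$.} There are two cases.
\begin{itemize}
\item If $i+1 \not\equiv j_k$ for all $k$ (and $i+1 \ne n^2$), then $\varphi(i+1)$ is non-distinguished, and $a_{i+1} = b_i$ holds by definition.
\item If $i+1 = j_k$ with $k \ge 1$, then $j_{k-1} \le i < j_k$. By the constancy step, $b_i = \pi^{k}(0) = a_{j_k}$. This also covers the case where $i$ is itself distinguished, namely $i = j_{k-1}$.
\item If $i = n^2-1$, so that $i+1 \equiv 0 = j_0$, then $j_{n-1} \le i < n^2$. Hence $b_{n^2-1} = \pi^{n}(0)$. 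Since $\pi$ is a single $n$-cycle, $\pi^n(0) = 0 = a_0$.
\end{itemize}

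The only real subtlety is this wrap-around case together with the ordering claim $j_0<j_1<\cdots<j_{n-1}$. That claim is what ensures the ``previous distinguished index'' before $j_k$ is $j_{k-1}$. It follows from the group structure, since group $k$ occupies the reading indices $kn,\dots,kn+n-1$. The single-cycle hypothesis is used exactly once, to close the loop via $\pi^n(0)=0$.
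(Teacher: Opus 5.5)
Your proposal is correct and follows essentially the same route as the paper: you prove $b_i = a_{i+1}$ by splitting on whether $\varphi(i+1)$ is distinguished, take the previous distinguished index in Group $g-1$ (your $j_{k-1}$), use that $b$ is constant from there up to $i$, and close the wrap-around case with $\pi^n(0)=0$. Your explicit formula $j_k = kn + r_k$ for the distinguished reading indices simply spells out the paper's ``exactly one distinguished index per group'' ordering argument. One harmless slip: you announce two cases but then list three.
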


\begin{proof}
Note that $R_p(u') = a_0, a_1, \cdots, a_{n^2-1}$ and $R_p(v') = b_0, b_1, \cdots, b_{n^2-1}$. We claim that $b_i = a_{i+1}$ for every $i \in \mathbb{Z}_{n^2}$, where the indices are taken modulo $n^2$.

First, if $\varphi(i+1)$ is a non-distinguished position, then by construction $a_{i+1} = b_i$, so the claim holds.

Now suppose $\varphi(i+1)$ is distinguished. Let it belong to Block $t$ and Group $g$. By construction, we have $t = \pi^g(0)$ and $\varphi(i+1) = f(t,g) = f(\pi^g(0),g)$. Also, $a_{i+1} = u'_{\varphi(i+1)} = \pi^g(0)$.

\textbf{Case 1:} $g>0$. Consider the maximum value $j$ such that $j<i+1$ and $\varphi(j)$ is distinguished. Since each group contains exactly one distinguished index, $\varphi(j)$ must be in Group $g-1$. Also, all of $\varphi(j+1), \varphi(j+2), \dots, \varphi(i)$ must be non-distinguished.

By construction, we have $\varphi(j) = f(\pi^{g-1}(0),g-1)$ and $b_j = v'_{\varphi(j)} = \pi^g(0)$. Furthermore, since $\varphi(j+1), \varphi(j+2), \dots, \varphi(i)$ are non-distinguished, we have $b_i = b_{i-1} = \cdots = b_j = \pi^g(0)$. Hence, $a_{i+1} = b_i$.

\textbf{Case 2:} $g=0$. We have $\varphi(i+1) = 0$, so $i=n^2-1$ and $a_{i+1}=0$. Consider the maximum value $j \in \mathbb{Z}_{n^2}$ such that $\varphi(j)$ is distinguished. Since each group contains exactly one distinguished index, $\varphi(j)$ must be in Group $n-1$. Also, all of $\varphi(j+1), \varphi(j+2), \dots, \varphi(n^2-1)$ must be non-distinguished.

By construction, we have $\varphi(j) = f(\pi^{n-1}(0),n-1)$ and $b_j = v'_{\varphi(j)} = \pi^n(0) = 0$. Furthermore, since $\varphi(j+1), \varphi(j+2), \dots, \varphi(n^2-1)$ are non-distinguished, we have $b_i = b_{n^2-1} = b_{n^2-2} = \cdots = b_j = 0$. Hence, $a_{i+1} = b_i$.

Therefore, $R_p(u')$ and $R_p(v')$ are cyclically equivalent with offset $1$.
\end{proof}

From Lemma~\ref{step}, we can conclude that $u'$ and $v'$ are cyclically equivalent with offset $p$.

\begin{lemma} \label{main2}
The words $u'$ and $v'$ can be obtained from $u$ and $v$, respectively, by a simultaneous insertion.
\end{lemma}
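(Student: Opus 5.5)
To prove Lemma~\ref{main2}, we have to check two things. First, the distinguished positions occur in $u'$ and $v'$ in the same left-to-right order as the letters of $u$ and $v$. Second, every non-distinguished position carries the same letter in $u'$ as in $v'$. Together these say exactly that $u'$ and $v'$ come from $u$ and $v$ by one common simultaneous insertion, with the inserted strings $u_0,\dots,u_n$ read off from the non-distinguished positions lying between consecutive distinguished ones.

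\textbf{Step 1: order of the distinguished positions.} By construction there is exactly one distinguished position in each block. Block $t$ occupies positions $tn,\dots,tn+n-1$, so the distinguished positions, listed from left to right, are the ones in Blocks $0,1,\dots,n-1$ in that order. The position in Block $t$ is $f(t,g)$ with $t=\pi^g(0)$. There $u'$ carries $\pi^g(0)=t=u_t$, and $v'$ carries $\pi^{g+1}(0)=\pi(t)=v_t$. Hence the $t$-th distinguished position from the left holds $u_t$ in $u'$ and $v_t$ in $v'$, as required.

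\textbf{Step 2: agreement at non-distinguished positions.} Let $\varphi(i)$ be non-distinguished. The filling rule sets $a_i=b_{i-1}$ and $b_i=b_{i-1}$, so $a_i=b_i$, that is, $u'_{\varphi(i)}=v'_{\varphi(i)}$. We should also make sure the rule is well defined: processing $i$ in increasing order, $b_{i-1}$ is always already known. This holds because $i=0$ is distinguished, so the recursion starts from $b_0$, which is set at the distinguished position $\varphi(0)=0$.

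\textbf{Step 3: conclusion.} Define $u_j$ (for $0\le j\le n$) to be the common word formed by the non-distinguished positions strictly between the distinguished positions in Blocks $j-1$ and $j$. For $j=0$ take the positions before the first distinguished one, and for $j=n$ those after the last. Then $u'=u_0u_0'\cdots$ has exactly the form required by the definition of simultaneous insertion, and so does $v'$, with the same inserted words.

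There is no real obstacle here. The only point needing care is Step 1: the left-to-right order of distinguished positions must be the block order, not the group order. It is the group order that governs the filling, so the two orders should not be confused.
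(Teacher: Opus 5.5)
Your proof is correct and follows essentially the same route as the paper. Each block contains exactly one distinguished position, carrying $t$ in $u'$ and $\pi(t)$ in $v'$. Every non-distinguished position has $u'_i=v'_i$ because $a_i=b_i=b_{i-1}$, so deleting the common non-distinguished letters recovers $u$ and $v$.

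Your well-definedness check and explicit description of the inserted strings only spell out what the paper leaves implicit. In Step 3, rename the inserted strings, since $u_t$ already denotes the letters of $u$ and ``$u'=u_0u_0'\cdots$'' is garbled.
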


\begin{proof}
For each Block $t$, let $k$ be the unique integer in $\mathbb{Z}_{n}$ such that $t=\pi^k(0)$. Then, the distinguished position in Block $t$ is $f(t,k)$, and at that position we have
\[
u'_{f(t,k)} = t \quad \text{and} \quad
v'_{f(t,k)} = \pi(t).
\]
Also, if $i \in \mathbb{Z}_{n^2}$ is a non-distinguished position, then $u'_i = v'_i$.

Therefore, if we keep only the distinguished positions, and delete all other positions, from $u'$ we obtain
\[
0\,1\,\cdots\,(n-1)=u,
\]
while from $v'$ we obtain
\[
\pi(0)\,\pi(1)\,\cdots\,\pi(n-1)=v.
\]
Since the same letters are deleted in both words, $u'$ and $v'$ can be obtained from $u$ and $v$, respectively, by a simultaneous insertion.
\end{proof}

From Lemmas~\ref{step}, \ref{main1} and~\ref{main2}, we now obtain the desired conclusion for the single-cycle case.

\begin{theorem} \label{mainthm1}
If $\pi \in S_n$ consists of a single cycle, then the words
\[
u = 0\,1\,\cdots\,(n-1) \quad \text{and} \quad
v = \pi(0)\,\pi(1)\,\cdots\,\pi(n-1)
\]
are cyclically equalizable.
\end{theorem}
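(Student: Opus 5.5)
The plan is to assemble the construction of this subsection into a proof of Theorem~\ref{mainthm1}; essentially all the work is already done, and only a few points need checking. Take the words $u'$ and $v'$ of length $n^2$ built above: the $n$ distinguished positions $f(\pi^k(0),k)$ for $k\in\mathbb{Z}_n$ are filled with $\pi^k(0)$ in $u'$ and $\pi^{k+1}(0)$ in $v'$. The remaining positions are filled in the reading order $\varphi(0),\varphi(1),\dots$ by copying the most recent letter of $v'$, so that $a_i=b_i=b_{i-1}$.

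First I will confirm that the construction is well defined. Since $\pi$ is a single $n$-cycle, the map $k\mapsto \pi^k(0)$ is a bijection $\mathbb{Z}_n\to\mathbb{Z}_n$. Because $f$ is a bijection $\mathbb{Z}_n^2\to\mathbb{Z}_{n^2}$, the distinguished positions are distinct, and there is exactly one in each block and one in each group. The position $\varphi(0)=0=f(0,0)$ is distinguished, so the sequential filling of the $b_i$ always has $b_{i-1}$ available when it is needed.

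Second, I will invoke Lemma~\ref{main2}. It shows that deleting the non-distinguished positions from $u'$ and $v'$ leaves $u$ and $v$ respectively, and that $u'$ and $v'$ agree at every non-distinguished position. Hence $u',v'$ arise from $u,v$ by a simultaneous insertion. The point to check is that the block order matches the original order: Block $t$ contains the letter $t$ in $u'$ and $\pi(t)$ in $v'$, so reading blocks in increasing $t$ gives exactly $u$ and $v$.

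Third, Lemma~\ref{main1} gives that $R_p(u')$ and $R_p(v')$ are cyclically equivalent with offset $1$. Since $p=n+1$ satisfies $\gcd(p,n^2)=1$, Lemma~\ref{step} applied with length $n^2$ shows that $u'$ and $v'$ are cyclically equivalent with offset $p$. Combining the second and third steps matches the definition of cyclic equalizability, which proves the theorem.

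The only real subtlety, and thus the main obstacle, is the wrap-around in Lemma~\ref{main1}, namely Case~2. There the single-cycle hypothesis is used essentially: $\pi^n(0)=0$ makes the letter carried past the last distinguished position equal $u'_0=0$. This already holds under the hypothesis. I will also note the trivial case $n=1$, where $u=v$ is immediate.
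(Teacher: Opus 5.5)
Your proposal is correct and follows the paper's proof: the paper also obtains Theorem~\ref{mainthm1} by combining Lemma~\ref{main2} (simultaneous insertion) and Lemma~\ref{main1} (offset $1$ for the readings with step size $p=n+1$), then applying Lemma~\ref{step} with $\gcd(n+1,n^2)=1$ to get that $u'$ and $v'$ are cyclically equivalent with offset $p$. Your extra well-definedness checks are exactly the facts those lemmas rely on: one distinguished position per block and per group, and $\varphi(0)=f(0,0)$ starting the sequential filling.
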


\begin{example} \label{example1}
Let $n=5$, $u=01234$, and $v=30421$. We have $\pi(0)=3$, $\pi(1)=0$, $\pi(2)=4$, $\pi(3)=2$, and $\pi(4)=1$, so $\pi=(0\,3\,2\,4\,1)$ is a single cycle. Also, we have $\varphi(i)=6i \pmod{25}$. Thus, the sequence $\varphi(0), \varphi(1), \dots, \varphi(24)$ is
\[
0,6,12,18,24,5,11,17,23,4,10,16,22,3,9,15,21,2,8,14,20,1,7,13,19,
\]
which is the order of reading with step size $p=6$. The distinguished positions are
\[
f(0,0)=0,\quad f(3,1)=17,\quad f(2,2)=10,\quad f(4,3)=21,\quad f(1,4)=7.
\]

Table~\ref{table1} shows the words $R_p(u')$ and $R_p(v')$, which are the reading with step size $p=6$ of $u'$ and $v'$, respectively. The positions are divided into five groups of size five. The distinguished positions are shown in bold underlined. One can see that $R_p(v')$ is a cyclic shift of $R_p(u')$ by one position.

Table~\ref{table2} shows the words $u'$ and $v'$ written in their original order, divided into five blocks of size five. Again, the distinguished positions are shown in bold underlined. Deleting all non-bold entries yields $u$ from $u'$ and $v$ from $v'$.

\begin{table}[H]
\centering
\caption{The reading with step size $p=6$ of $u'$ and $v'$, divided into five groups} \label{table1}
\small
\begin{tabular}{|c|ccccc|}
\hline
\multicolumn{6}{|c|}{Group 0 ($i=0,1,\dots,4$)} \\
\hline
$\varphi(i)$          & $\mathbf{\underline{0}}$ & 6 & 12 & 18 & 24 \\
$a_i=u'_{\varphi(i)}$ & $\mathbf{\underline{0}}$ & 3 & 3 & 3 & 3 \\
$b_i=v'_{\varphi(i)}$ & $\mathbf{\underline{3}}$ & 3 & 3 & 3 & 3 \\
\hline
\multicolumn{6}{|c|}{Group 1 ($i=5,6,\dots,9$)} \\
\hline
$\varphi(i)$          & 5 & 11 & $\mathbf{\underline{17}}$ & 23 & 4 \\
$a_i=u'_{\varphi(i)}$ & 3 & 3 & $\mathbf{\underline{3}}$ & 2 & 2 \\
$b_i=v'_{\varphi(i)}$ & 3 & 3 & $\mathbf{\underline{2}}$ & 2 & 2 \\
\hline
\multicolumn{6}{|c|}{Group 2 ($i=10,11,\dots,14$)} \\
\hline
$\varphi(i)$          & $\mathbf{\underline{10}}$ & 16 & 22 & 3 & 9 \\
$a_i=u'_{\varphi(i)}$ & $\mathbf{\underline{2}}$ & 4 & 4 & 4 & 4 \\
$b_i=v'_{\varphi(i)}$ & $\mathbf{\underline{4}}$ & 4 & 4 & 4 & 4 \\
\hline
\multicolumn{6}{|c|}{Group 3 ($i=15,16,\dots,19$)} \\
\hline
$\varphi(i)$          & 15 & $\mathbf{\underline{21}}$ & 2 & 8 & 14 \\
$a_i=u'_{\varphi(i)}$ & 4 & $\mathbf{\underline{4}}$ & 1 & 1 & 1 \\
$b_i=v'_{\varphi(i)}$ & 4 & $\mathbf{\underline{1}}$ & 1 & 1 & 1 \\
\hline
\multicolumn{6}{|c|}{Group 4 ($i=20,21,\dots,24$)} \\
\hline
$\varphi(i)$          & 20 & 1 & $\mathbf{\underline{7}}$ & 13 & 19 \\
$a_i=u'_{\varphi(i)}$ & 1 & 1 & $\mathbf{\underline{1}}$ & 0 & 0 \\
$b_i=v'_{\varphi(i)}$ & 1 & 1 & $\mathbf{\underline{0}}$ & 0 & 0 \\
\hline
\end{tabular}
\end{table}

\begin{table}[H]
\centering
\caption{The original order of $u'$ and $v'$, divided into five blocks} \label{table2}
\small
\begin{tabular}{|c|ccccc|ccccc|ccccc|ccccc|ccccc|}
\hline
& \multicolumn{5}{c|}{Block 0} & \multicolumn{5}{c|}{Block 1} & \multicolumn{5}{c|}{Block 2} & \multicolumn{5}{c|}{Block 3} & \multicolumn{5}{c|}{Block 4} \\
\hline
$i$
& $\mathbf{\underline{0}}$ & 1 & 2 & 3 & 4
& 5 & 6 & $\mathbf{\underline{7}}$ & 8 & 9
& $\mathbf{\underline{10}}$ & 11 & 12 & 13 & 14
& 15 & 16 & $\mathbf{\underline{17}}$ & 18 & 19
& 20 & $\mathbf{\underline{21}}$ & 22 & 23 & 24 \\
\hline
$u'_i$
& $\mathbf{\underline{0}}$ & 1 & 1 & 4 & 2
& 3 & 3 & $\mathbf{\underline{1}}$ & 1 & 4
& $\mathbf{\underline{2}}$ & 3 & 3 & 0 & 1
& 4 & 4 & $\mathbf{\underline{3}}$ & 3 & 0
& 1 & $\mathbf{\underline{4}}$ & 4 & 2 & 3 \\
$v'_i$
& $\mathbf{\underline{3}}$ & 1 & 1 & 4 & 2
& 3 & 3 & $\mathbf{\underline{0}}$ & 1 & 4
& $\mathbf{\underline{4}}$ & 3 & 3 & 0 & 1
& 4 & 4 & $\mathbf{\underline{2}}$ & 3 & 0
& 1 & $\mathbf{\underline{1}}$ & 4 & 2 & 3 \\
\hline
\end{tabular}
\end{table}
\end{example}

\subsection{Construction for Multiple-Cycle Case}
We now consider the general case where $\pi \in S_n$ decomposes into $m$ disjoint cycles $c_1, \dots, c_m$ of lengths $\ell_1, \dots, \ell_m$, respectively, where $\sum_{s=1}^m \ell_s = n$. 

We will apply a modified version of the single-cycle construction to each cycle independently, and then combine the results using columnwise interleaving.

\subsubsection{Processing Each Cycle}
For each cycle $c_s$ ($1 \leq s \leq m$), we will construct words $u^{[s]}$ and $v^{[s]}$, each of length $n^2$. We maintain the same block and group structures, as well as the same reading step size $p = n+1$, as defined in the single-cycle case.

Let $\pi^0(x_s), \pi^1(x_s), \dots, \pi^{\ell_s-1}(x_s)$ be the elements of $c_s$, where $x_s$ is an element of $c_s$ with the minimum value. We assign distinguished positions in $u^{[s]}$ and $v^{[s]}$ similarly to the single-cycle case, but for only $\ell_s$ positions. For each $k \in \mathbb{Z}_{\ell_s}$, consider the unique index $f(\pi^k(x_s),\,k)$. We set the letters
\[
u^{[s]}_{f(\pi^k(x_s),\,k)}=\pi^k(x_s) \quad \text{and} \quad
v^{[s]}_{f(\pi^k(x_s),\,k)}=\pi^{k+1}(x_s).
\]

These are distinguished positions in $u^{[s]}$ and $v^{[s]}$. Note that there is at most one distinguished position in each block (and exactly one in each of Groups $0,1,\dots,\ell_s-1$, and none in the remaining groups).

Once the $\ell_s$ distinguished positions are filled, we fill the remaining $n^2 - \ell_s$ positions in the same way as the single-cycle case.

For convenience, define $a^{[s]}_i := u^{[s]}_{\varphi(i)}$ and $b^{[s]}_i := v^{[s]}_{\varphi(i)}$. Let $q$ be an index such that $\varphi(q) = f(x_s,0)$. Then, $\varphi(q)$ is a distinguished position, with $a^{[s]}_q = x_s$ and $b^{[s]}_q = \pi(x_s)$ already filled. We now fill the remaining positions by following the reading order $\varphi(q+1),\varphi(q+2),\dots,\varphi(n^2-1),\varphi(0),\varphi(1),\dots,\varphi(q-1)$, skipping the distinguished positions already filled. Specifically, for each $i=q+1,q+2,\dots,n^2-1,0,1,\dots,q-1$, if $\varphi(i)$ is a non-distinguished position, we set
\[
a^{[s]}_i = b^{[s]}_{i-1}  \quad \text{and} \quad
b^{[s]}_i = b^{[s]}_{i-1}.
\]

We have filled both distinguished and non-distinguished positions and obtain sequences $u^{[s]}$ and $v^{[s]}$ of length $n^2$.

\begin{lemma} \label{main3}
For each cycle $c_s$, the words $R_p(u^{[s]})$ and $R_p(v^{[s]})$ are cyclically equivalent with offset $1$.
\end{lemma}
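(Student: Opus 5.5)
We will mirror the proof of Lemma~\ref{main1}, working in the reading order $a^{[s]}_0a^{[s]}_1\cdots a^{[s]}_{n^2-1}$ and $b^{[s]}_0b^{[s]}_1\cdots b^{[s]}_{n^2-1}$, which are exactly $R_p(u^{[s]})$ and $R_p(v^{[s]})$. The goal is to show $b^{[s]}_i = a^{[s]}_{i+1}$ for every $i \in \mathbb{Z}_{n^2}$, with indices taken modulo $n^2$. The only difference from the single-cycle case is that the filling starts at index $q$ (where $\varphi(q)=f(x_s,0)$) rather than at $0$, and that distinguished positions occur only in Groups $0,\dots,\ell_s-1$. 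Both differences are handled by working cyclically: we regard the sequence of indices $q, q+1, \dots, n^2-1, 0, \dots, q-1$ as one cyclic sweep.

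First, the non-distinguished case. If $\varphi(i+1)$ is non-distinguished, then $i+1 \neq q$, because $\varphi(q)$ is distinguished. Hence $i+1$ lies in the filling sweep, and by construction $a^{[s]}_{i+1}=b^{[s]}_i$. One point needs checking: when $i+1$ is filled, $b^{[s]}_i$ must already be defined. This holds because $i$ precedes $i+1$ in the cyclic sweep starting at $q$. Either $i=q$, or $i$ was filled earlier, or $\varphi(i)$ is distinguished.

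Second, the distinguished case. Suppose $\varphi(i+1)=f(\pi^g(x_s),g)$ with $0\le g<\ell_s$, so that $a^{[s]}_{i+1}=\pi^g(x_s)$. Let $j$ be the last index strictly before $i+1$ in the cyclic order with $\varphi(j)$ distinguished. Such a $j$ exists, possibly equal to $i+1$ itself going all the way around when $\ell_s=1$. Every index strictly between $j$ and $i+1$ cyclically is non-distinguished, and by the first case the value $b$ propagates unchanged along this run, giving $b^{[s]}_i=b^{[s]}_j$. It therefore suffices to identify $j$.

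The main obstacle is showing that $\varphi(j)$ lies in Group $g-1$ modulo $\ell_s$. Groups are consecutive runs of $n$ indices in reading order, each of Groups $0,\dots,\ell_s-1$ contains exactly one distinguished position, and the remaining groups contain none. So cyclically stepping backward from Group $g$ with $g>0$, the previous distinguished index lies in Group $g-1$. For $g=0$, we wrap past the empty Groups $n-1,\dots,\ell_s$ back to Group $\ell_s-1$. In that case $b^{[s]}_j=\pi^{\ell_s}(x_s)=x_s=\pi^0(x_s)$, since $c_s$ has length $\ell_s$. In both cases $b^{[s]}_j=\pi^{g}(x_s)=a^{[s]}_{i+1}$. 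If $\ell_s=1$, then $j=i+1$ and $b^{[s]}_j=\pi(x_s)=x_s$, which is consistent.

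This establishes the claim, so $R_p(u^{[s]})$ and $R_p(v^{[s]})$ are cyclically equivalent with offset $1$.
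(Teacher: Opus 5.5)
Your proposal is correct and follows essentially the same route as the paper. You prove $b^{[s]}_i = a^{[s]}_{i+1}$ by splitting on whether $\varphi(i+1)$ is distinguished; in the distinguished case you locate the previous distinguished index in Group $g-1$ (wrapping past the empty Groups $\ell_s,\dots,n-1$ to Group $\ell_s-1$ when $g=0$) and propagate $b^{[s]}$ along the non-distinguished run, with explicit checks of well-definedness and of the case $\ell_s=1$ that the paper leaves implicit.
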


\begin{proof}
The proof is very similar to that of Lemma~\ref{main1}, with the cycle $\pi^0(0), \pi^1(0),\dots, \pi^{n-1}(0)$ replaced by $\pi^0(x_s),\pi^1(x_s),\dots,\pi^{\ell_s-1}(x_s)$. First, write $R_p(u^{[s]}) = a^{[s]}_0, a^{[s]}_1, \cdots, a^{[s]}_{n^2-1}$ and $R_p(v^{[s]}) = b^{[s]}_0, b^{[s]}_1, \cdots, b^{[s]}_{n^2-1}$. We claim that $b^{[s]}_i = a^{[s]}_{i+1}$ for every $i \in \mathbb{Z}_{n^2}$, where the indices are taken modulo $n^2$.

If $\varphi(i+1)$ is a non-distinguished position, then $a^{[s]}_{i+1} = b^{[s]}_i$ by construction. Now suppose $\varphi(i+1)$ is distinguished. Let it belong to Block $t$ and Group $g$. Then, $t = \pi^g(x_s)$ and $\varphi(i+1) = f(t,g) = f(\pi^g(x_s),g)$. Also, $a^{[s]}_{i+1} = u^{[s]}_{\varphi(i+1)} = \pi^g(x_s)$.

\textbf{Case 1:} $g>0$. Consider the maximum $j$ such that $j<i+1$ and $\varphi(j)$ is distinguished. By the same argument as in Case~1 of the proof of Lemma~\ref{main1}, $\varphi(j)$ is in Group $g-1$ and $b^{[s]}_j = v^{[s]}_{\varphi(j)} = \pi^g(x_s)$. Since $\varphi(j+1), \varphi(j+2), \dots, \varphi(i)$ are non-distinguished, we have $b^{[s]}_i = b^{[s]}_{i-1} = \cdots = b^{[s]}_j = \pi^g(x_s)$. Hence, $a^{[s]}_{i+1} = b^{[s]}_i$.

\textbf{Case 2:} $g=0$. We have $\varphi(i+1) = \varphi(q)$, so $i=q-1$ and $a^{[s]}_{i+1}=x_s$. Consider the maximum $j \in \mathbb{Z}_{n^2}$ such that $\varphi(j)$ is distinguished. By the same argument as in Case~2 of the proof of Lemma~\ref{main1}, $\varphi(j)$ is in Group $\ell_s-1$ and $b^{[s]}_j = v^{[s]}_{\varphi(j)} = x_s$. Since $\varphi(j+1), \varphi(j+2), \dots, \varphi(n^2-1), \varphi(0), \varphi(1), \dots, \varphi(q-1)$ are non-distinguished, we have $b^{[s]}_i = b^{[s]}_{q-1} = b^{[s]}_{q-2} = \cdots = b^{[s]}_0 = b^{[s]}_{n^2-1} = b^{[s]}_{n^2-2} = \dots = b^{[s]}_j = x_s$. Hence, $a^{[s]}_{i+1} = b^{[s]}_i$.

Therefore, $R_p(u^{[s]})$ and $R_p(v^{[s]})$ are cyclically equivalent with offset $1$.
\end{proof}

\subsubsection{Combining All Cycles}
After processing all cycles, we obtain $m$ pairs of cyclically equivalent words $(u^{[1]}, v^{[1]}), \dots,$ $(u^{[m]}, v^{[m]})$, all of the same length $n^2$ and with the same offset $p$. We now apply columnwise interleaving to combine them:
\[
u' = I(u^{[1]}, \dots, u^{[m]}) \quad \text{and} \quad v' = I(v^{[1]}, \dots, v^{[m]}).
\]
Since all pairs $(u^{[s]}, v^{[s]})$ have the same cyclic offset $p$, Lemma~\ref{interleaving} implies that $u'$ and $v'$ are cyclically equivalent.

\begin{lemma} \label{main4}
The words $u'$ and $v'$ can be obtained from $u$ and $v$, respectively, by a simultaneous insertion.
\end{lemma}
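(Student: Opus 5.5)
The plan is to mirror the proof of Lemma~\ref{main2}: identify the distinguished positions inside $u'$ and $v'$, check that every other position carries the same letter in both words, and check that reading only the distinguished positions from left to right recovers $u$ and $v$.

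\textbf{Step 1: non-distinguished positions agree.} For each $s$, a non-distinguished position $\varphi(i)$ of $u^{[s]}$ and $v^{[s]}$ was filled with $a^{[s]}_i=b^{[s]}_{i-1}$ and $b^{[s]}_i=b^{[s]}_{i-1}$. Hence $u^{[s]}_j=v^{[s]}_j$ at every non-distinguished position $j$.

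\textbf{Step 2: locate the distinguished positions in $u'$.} In the interleaving, letter $j$ of the $s$-th word sits at position $mj+(s-1)$ of $u'$ and of $v'$. Call the interval $[mnt,\,mnt+mn-1]$ the \emph{super-block} $t$.
\begin{itemize}
\item Block $t$ of $u^{[s]}$, i.e.\ positions $tn,\dots,tn+n-1$, is mapped into super-block $t$ for every $s$.
\item The super-blocks $t=0,1,\dots,n-1$ appear in increasing order in $u'$ and partition it.
\item The distinguished positions of $u^{[s]}$ are exactly the positions $f(t,k)$ with $t=\pi^k(x_s)$, one for each element $t$ of the cycle $c_s$. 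Such a position lies in Block $t$ and carries $u^{[s]}=t$ and $v^{[s]}=\pi(t)$; this includes $k=\ell_s-1$, where $\pi^{\ell_s}(x_s)=x_s$.
\item Since the cycles $c_1,\dots,c_m$ partition $\{0,\dots,n-1\}$, each $t$ belongs to exactly one cycle $c_s$.
\end{itemize}
Consequently, super-block $t$ of $u'$ contains exactly one distinguished position, coming from that $s$. At this position $u'$ has letter $t$ and $v'$ has letter $\pi(t)$.

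\textbf{Step 3: conclude.} Delete from $u'$ and $v'$ all positions other than these $n$ distinguished ones. By Step 1 the deleted letters coincide in the two words. Since the super-blocks appear in order, the remaining letters read $0\,1\cdots(n-1)=u$ from $u'$ and $\pi(0)\cdots\pi(n-1)=v$ from $v'$. So $u',v'$ arise from $u,v$ by inserting the same strings at the same places, i.e.\ by a simultaneous insertion.

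The only point needing care is Step 2: the left-to-right order of distinguished positions must follow $t$ rather than $s$. This holds because the interleaving sends each block of length $n$ in every component into the same contiguous super-block of length $mn$, so the relative order of blocks is preserved regardless of which component word the distinguished position comes from.
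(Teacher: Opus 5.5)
Your proposal is correct and follows the same route as the paper's proof. Non-distinguished positions carry equal letters, each Block $t$ receives exactly one distinguished position (from the unique cycle containing $t$) with letters $t$ and $\pi(t)$, and deleting everything else recovers $u$ and $v$. Your super-block observation, that positions $tn,\dots,tn+n-1$ of every component land in $[mnt,\,mnt+mn-1]$, makes precise the paper's brief remark that interleaving ``preserves the order of blocks.'' That ordering across components is exactly the point that needed justification.
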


\begin{proof}
For each block $t$, there is a unique cycle $c_s$ and a unique integer $k \in \mathbb{Z}_{\ell_s}$ such that $t=\pi^k(x_s)$. Therefore, the only sequences having a distinguished position in Block $t$ are $u^{[s]}$ and $v^{[s]}$. In both sequences, the distinguished position in Block $t$ is $f(t,k)$, and at that position we have
\[
u^{[s]}_{f(t,k)}=t
\quad \text{and} \quad
v^{[s]}_{f(t,k)}=\pi(t).
\]

Columnwise interleaving preserves the relative order of positions within each component word, and thus preserves the order of blocks. Also, non-distinguished positions contain the same letters in $u'$ and $v'$. Therefore, if we keep only the distinguished positions, and delete all other positions, from $u'$ we obtain
\[
0\,1\,\cdots\,(n-1)=u,
\]
while from $v'$ we obtain
\[
\pi(0)\,\pi(1)\,\cdots\,\pi(n-1)=v.
\]
Since the same letters are deleted in both words, $u'$ and $v'$ can be obtained from $u$ and $v$, respectively, by a simultaneous insertion.
\end{proof}

From Lemmas~\ref{step}, \ref{interleaving}, \ref{main3} and~\ref{main4}, we now obtain the desired conclusion for the general case.

\begin{theorem} \label{mainthm2}
For any $\pi \in S_n$, the words
\[
u = 0\,1\,\cdots\,(n-1) \quad \text{and} \quad
v = \pi(0)\,\pi(1)\,\cdots\,\pi(n-1)
\]
are cyclically equalizable.
\end{theorem}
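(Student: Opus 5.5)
The proof assembles the pieces already built in the multiple-cycle construction. Decompose $\pi$ into its disjoint cycles $c_1,\dots,c_m$ of lengths $\ell_1,\dots,\ell_m$, and for each $s$ build the pair $(u^{[s]},v^{[s]})$ of length $n^2$ as described above. By Lemma~\ref{main3}, $R_p(u^{[s]})$ and $R_p(v^{[s]})$ are cyclically equivalent with offset $1$. Since $\gcd(p,n^2)=\gcd(n+1,n^2)=1$, Lemma~\ref{step} (applied with length $n^2$ and $d=1$) then shows that $u^{[s]}$ and $v^{[s]}$ are cyclically equivalent with offset exactly $p$, for every $s$.

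The key point is that this common offset $p$ does not depend on $s$. So the hypotheses of Lemma~\ref{interleaving} hold with $\delta=p$ and word length $n^2$. It follows that $u'=I(u^{[1]},\dots,u^{[m]})$ and $v'=I(v^{[1]},\dots,v^{[m]})$ are cyclically equivalent, with offset $mp$ in $\mathbb{Z}_{mn^2}$.

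Finally, Lemma~\ref{main4} says that $u'$ and $v'$ arise from $u$ and $v$ by a simultaneous insertion. Together with the previous paragraph, this is exactly the definition of cyclic equalizability.

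The step needing the most care is Lemma~\ref{main4}, which the proof takes as given. Its content is as follows. Every block $t$ contains exactly one distinguished position among all the $u^{[s]}$, because the cycles partition $\{0,\dots,n-1\}$. Interleaving keeps the block order, since positions of block $t$ in every component precede those of block $t+1$ after interleaving. Non-distinguished letters agree in $u'$ and $v'$. Hence deleting the non-distinguished letters leaves $u$ and $v$.

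The trivial case $m=1$ reduces to Theorem~\ref{mainthm1}, and $n=1$ is immediate. Combined with Corollary~\ref{normalize} and Lemma~\ref{reduction}, this also finishes the sufficiency direction of Theorem~\ref{mainthm}.
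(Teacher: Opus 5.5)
Your proposal is correct and follows the paper's own argument. You build the per-cycle pairs, then use Lemma~\ref{main3} with Lemma~\ref{step} (since $\gcd(n+1,n^2)=1$) to obtain the common offset $p$, combine the pairs via Lemma~\ref{interleaving}, and conclude with Lemma~\ref{main4}; your explicit offset $mp$ for the interleaved words and your sketch of why Lemma~\ref{main4} holds are also accurate.
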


\begin{example}
Let $n=5$, $u=01234$, and $v=34021$. We have $\pi(0)=3$, $\pi(1)=4$, $\pi(2)=0$, $\pi(3)=2$, and $\pi(4)=1$, so $\pi=(0\,3\,2)(1\,4)$ consists of two cycles. The distinguished positions in $u^{[1]}$ and $v^{[1]}$ are
\[
f(0,0)=0,\quad f(3,1)=17,\quad f(2,2)=10,
\] and the distinguished positions in $u^{[2]}$ and $v^{[2]}$ are
\[
f(1,0)=6,\quad f(4,1)=23.
\]

Table~\ref{table3} shows the words $R_p(u^{[1]}), R_p(v^{[1]}), R_p(u^{[2]}), R_p(v^{[2]})$, which are the reading with step size $p=6$ of $u^{[1]}, v^{[1]}, u^{[2]}, v^{[2]}$, respectively. The distinguished positions are shown in bold underlined. One can see that $R_p(v^{[1]})$ (resp. $R_p(v^{[2]})$) is a cyclic shift of $R_p(u^{[1]})$ (resp. $R_p(u^{[2]})$) by one position.

Table~\ref{table4} shows the words $u^{[1]}, v^{[1]}, u^{[2]}, v^{[2]}$ written in their original order. Again, the distinguished positions are shown in bold underlined. 
\begin{table}[H]
\centering
\caption{The reading with step size $p=6$ of $u^{[1]},v^{[1]},u^{[2]},v^{[2]}$} \label{table3}
\small
\begin{tabular}{|c|ccccc|}
\hline
\multicolumn{6}{|c|}{Group 0 ($i=0,1,\dots,4$)} \\
\hline
$\varphi(i)$          & $\mathbf{\underline{0}}$ & 6 & 12 & 18 & 24 \\
$a^{[1]}_i=u^{[1]}_{\varphi(i)}$ & $\mathbf{\underline{0}}$ & 3 & 3 & 3 & 3 \\
$b^{[1]}_i=v^{[1]}_{\varphi(i)}$ & $\mathbf{\underline{3}}$ & 3 & 3 & 3 & 3 \\
\hline
\multicolumn{6}{|c|}{Group 1 ($i=5,6,\dots,9$)} \\
\hline
$\varphi(i)$          & 5 & 11 & $\mathbf{\underline{17}}$ & 23 & 4 \\
$a^{[1]}_i=u^{[1]}_{\varphi(i)}$ & 3 & 3 & $\mathbf{\underline{3}}$ & 2 & 2 \\
$b^{[1]}_i=v^{[1]}_{\varphi(i)}$ & 3 & 3 & $\mathbf{\underline{2}}$ & 2 & 2 \\
\hline
\multicolumn{6}{|c|}{Group 2 ($i=10,11,\dots,14$)} \\
\hline
$\varphi(i)$          & $\mathbf{\underline{10}}$ & 16 & 22 & 3 & 9 \\
$a^{[1]}_i=u^{[1]}_{\varphi(i)}$ & $\mathbf{\underline{2}}$ & 0 & 0 & 0 & 0 \\
$b^{[1]}_i=v^{[1]}_{\varphi(i)}$ & $\mathbf{\underline{0}}$ & 0 & 0 & 0 & 0 \\
\hline
\multicolumn{6}{|c|}{Group 3 ($i=15,16,\dots,19$)} \\
\hline
$\varphi(i)$          & 15 & 21 & 2 & 8 & 14 \\
$a^{[1]}_i=u^{[1]}_{\varphi(i)}$ & 0 & 0 & 0 & 0 & 0 \\
$b^{[1]}_i=v^{[1]}_{\varphi(i)}$ & 0 & 0 & 0 & 0 & 0 \\
\hline
\multicolumn{6}{|c|}{Group 4 ($i=20,21,\dots,24$)} \\
\hline
$\varphi(i)$          & 20 & 1 & 7 & 13 & 19 \\
$a^{[1]}_i=u^{[1]}_{\varphi(i)}$ & 0 & 0 & 0 & 0 & 0 \\
$b^{[1]}_i=v^{[1]}_{\varphi(i)}$ & 0 & 0 & 0 & 0 & 0 \\
\hline
\end{tabular} \hspace{1cm}
\begin{tabular}{|c|ccccc|}
\hline
\multicolumn{6}{|c|}{Group 0 ($i=0,1,\dots,4$)} \\
\hline
$\varphi(i)$          & 0 & $\mathbf{\underline{6}}$ & 12 & 18 & 24 \\
$a^{[2]}_i=u^{[2]}_{\varphi(i)}$ & 1 & $\mathbf{\underline{1}}$ & 4 & 4 & 4 \\
$b^{[2]}_i=v^{[2]}_{\varphi(i)}$ & 1 & $\mathbf{\underline{4}}$ & 4 & 4 & 4 \\
\hline
\multicolumn{6}{|c|}{Group 1 ($i=5,6,\dots,9$)} \\
\hline
$\varphi(i)$          & 5 & 11 & 17 & $\mathbf{\underline{23}}$ & 4 \\
$a^{[2]}_i=u^{[2]}_{\varphi(i)}$ & 4 & 4 & 4 & $\mathbf{\underline{4}}$ & 1 \\
$b^{[2]}_i=v^{[2]}_{\varphi(i)}$ & 4 & 4 & 4 & $\mathbf{\underline{1}}$ & 1 \\
\hline
\multicolumn{6}{|c|}{Group 2 ($i=10,11,\dots,14$)} \\
\hline
$\varphi(i)$          & 10 & 16 & 22 & 3 & 9 \\
$a^{[2]}_i=u^{[2]}_{\varphi(i)}$ & 1 & 1 & 1 & 1 & 1 \\
$b^{[2]}_i=v^{[2]}_{\varphi(i)}$ & 1 & 1 & 1 & 1 & 1 \\
\hline
\multicolumn{6}{|c|}{Group 3 ($i=15,16,\dots,19$)} \\
\hline
$\varphi(i)$          & 15 & 21 & 2 & 8 & 14 \\
$a^{[2]}_i=u^{[2]}_{\varphi(i)}$ & 1 & 1 & 1 & 1 & 1 \\
$b^{[2]}_i=v^{[2]}_{\varphi(i)}$ & 1 & 1 & 1 & 1 & 1 \\
\hline
\multicolumn{6}{|c|}{Group 4 ($i=20,21,\dots,24$)} \\
\hline
$\varphi(i)$          & 20 & 1 & 7 & 13 & 19 \\
$a^{[2]}_i=u^{[2]}_{\varphi(i)}$ & 1 & 1 & 1 & 1 & 1 \\
$b^{[2]}_i=v^{[2]}_{\varphi(i)}$ & 1 & 1 & 1 & 1 & 1 \\
\hline
\end{tabular}
\end{table}

\begin{table}[H]
\centering
\caption{The original order of $u^{[1]},v^{[1]},u^{[2]},v^{[2]}$} \label{table4}
\small
\begin{tabular}{cccccc|ccccc|ccccc|ccccc|ccccc|}
\hline
\multicolumn{1}{|c|}{} & \multicolumn{5}{c|}{Block 0} & \multicolumn{5}{c|}{Block 1} & \multicolumn{5}{c|}{Block 2} & \multicolumn{5}{c|}{Block 3} & \multicolumn{5}{c|}{Block 4} \\
\hline
\multicolumn{1}{|c|}{$i$}
& $\mathbf{\underline{0}}$ & 1 & 2 & 3 & 4
& 5 & 6 & 7 & 8 & 9
& $\mathbf{\underline{10}}$ & 11 & 12 & 13 & 14
& 15 & 16 & $\mathbf{\underline{17}}$ & 18 & 19
& 20 & 21 & 22 & 23 & 24 \\
\hline
\multicolumn{1}{|c|}{$u^{[1]}_i$}
& $\mathbf{\underline{0}}$ & 0 & 0 & 0 & 2
& 3 & 3 & 0 & 0 & 0
& $\mathbf{\underline{2}}$ & 3 & 3 & 0 & 0
& 0 & 0 & $\mathbf{\underline{3}}$ & 3 & 0
& 0 & 0 & 0 & 2 & 3 \\
\multicolumn{1}{|c|}{$v^{[1]}_i$}
& $\mathbf{\underline{3}}$ & 0 & 0 & 0 & 2
& 3 & 3 & 0 & 0 & 0
& $\mathbf{\underline{0}}$ & 3 & 3 & 0 & 0
& 0 & 0 & $\mathbf{\underline{2}}$ & 3 & 0
& 0 & 0 & 0 & 2 & 3 \\
\hline \\
\end{tabular}

\begin{tabular}{cccccc|ccccc|ccccc|ccccc|ccccc|}
\hline
\multicolumn{1}{|c|}{} & \multicolumn{5}{c|}{Block 0} & \multicolumn{5}{c|}{Block 1} & \multicolumn{5}{c|}{Block 2} & \multicolumn{5}{c|}{Block 3} & \multicolumn{5}{c|}{Block 4} \\
\hline
\multicolumn{1}{|c|}{$i$}
& 0 & 1 & 2 & 3 & 4
& 5 & $\mathbf{\underline{6}}$ & 7 & 8 & 9
& 10 & 11 & 12 & 13 & 14
& 15 & 16 & 17 & 18 & 19
& 20 & 21 & 22 & $\mathbf{\underline{23}}$ & 24 \\
\hline
\multicolumn{1}{|c|}{$u^{[2]}_i$}
& 1 & 1 & 1 & 1 & 1
& 4 & $\mathbf{\underline{1}}$ & 1 & 1 & 1
& 1 & 4 & 4 & 1 & 1
& 1 & 1 & 4 & 4 & 1
& 1 & 1 & 1 & $\mathbf{\underline{4}}$ & 4 \\
\multicolumn{1}{|c|}{$v^{[2]}_i$}
& 1 & 1 & 1 & 1 & 1
& 4 & $\mathbf{\underline{4}}$ & 1 & 1 & 1
& 1 & 4 & 4 & 1 & 1
& 1 & 1 & 4 & 4 & 1
& 1 & 1 & 1 & $\mathbf{\underline{1}}$ & 4 \\
\hline
\end{tabular}
\end{table}
Finally, we construct columnwise interleaved words $u' = I(u^{[1]},u^{[2]})$ and $v' = I(v^{[1]},v^{[2]})$, each of length 50, that are cyclically equivalent to each other. Deleting all non-bold entries yields $u$ from $u'$ and $v$ from $v'$.
\begin{align*}
u' &=\h01\;  01\;  01\;  01\;  21\mid 
       34\;3\h1\;  01\;  01\;  01\mid 
     \h21\;  34\;  34\;  01\;  01\mid 
       01\;  01\;\h34\;  34\;  01\mid 
       01\;  01\;  01\;2\h4\;  34, \\
v' &= \h31\;  01\;  01\;  01\;  21\mid 
        34\;3\h4\;  01\;  01\;  01\mid 
      \h01\;  34\;  34\;  01\;  01\mid 
        01\;  01\;\h24\;  34\;  01\mid 
        01\;  01\;  01\;2\h1\;  34.
\end{align*}
\end{example}

\section{Consequences and Interpretations}
In this section, we discuss immediate consequences of our main theorem and its connections to existing notions in combinatorics on words.

\subsection{Algorithmic Implications}
The main theorem yields a simple decision procedure for cyclic equalizability: checking whether the Parikh vectors of two words are equal can be done in $O(n)$ time, where $n$ is the length of the words.

Moreover, the proof of the main theorem provides a constructive method to transform two Abelian equivalent words into cyclically equivalent words via simultaneous insertions. This yields an explicit algorithm for finding such insertions in $O(mn^2)$ time, where $m$ is the number of cycles in $\pi$.

\subsection{Relation to Circular Words}
Cyclic equalizability can be viewed as a bridge between Abelian equivalence and cyclic equivalence. Our result shows that, for any two Abelian equivalent words, there exist superwords obtained by inserting letters at the same positions such that the resulting words are conjugate. This provides a link between these two classical notions.

\section{Applications to Card-Based Cryptography}
We briefly discuss implications of our result for card-based cryptography. In this setting, a sequence of face-down cards can be modeled as a word over a finite alphabet, where each letter represents a type of card. A fundamental operation in many protocols is the \emph{random cut}, which cyclically shifts the sequence by an unknown offset. Because of this operation, two sequences that differ only by a cyclic shift can be made indistinguishable to all players.

Cyclic equalizability was introduced by Shinagawa and Nuida to capture this phenomenon. In their framework, inserting letters corresponds to inserting additional cards into the sequence without revealing the existing cards. Our characterization simplifies the analysis of such protocols. In particular, our main theorem shows that two sequences can be made indistinguishable under random cuts, after inserting additional cards, if and only if they have the same multiset of card types.

Our result can also be interpreted as a statement about \emph{information erasure}. Let $S \subseteq \Sigma^n$ be a set of possible inputs, and suppose a sequence of face-down cards $X=(x_1,\dots,x_n) \in S$ is given, but its exact value is unknown. A protocol achieves information erasure if, after inserting additional cards and applying a random cut, the distribution of the revealed output does not depend on the choice of $X \in S$. Our result implies that information erasure is always achievable in the case $|S|=2$, since any two sequences with the same multiset of cards can be made cyclically indistinguishable by suitable insertions.

\section{Open Problems}
While this paper provides a complete characterization of cyclic equalizability for two words over arbitrary finite alphabets, several natural questions remain open.

\begin{itemize}
    \item \textbf{Cyclic equalizability for multiple words.}  
    The most immediate open problem is to characterize cyclic equalizability for $k \ge 3$ words. In contrast to the two-word case, it is not clear whether equality of Parikh vectors is sufficient when more than two words are involved. Determining necessary and sufficient conditions in this setting remains an important direction for future work.

    \item \textbf{Minimal insertion length.}  
    Given two cyclically equalizable words, what is the minimum number of letters that must be inserted to obtain cyclically equivalent words? Establishing tight bounds or efficient algorithms for this quantity would strengthen the constructive aspect of cyclic equalizability.

    \item \textbf{Restricted insertions.}  
    One may consider variants of cyclic equalizability where insertions are restricted to a fixed subset of the alphabet. Understanding how such restrictions affect the characterization is an interesting problem.
\end{itemize}

Several of these questions were also raised by Shinagawa and Nuida \cite{cyclic}, particularly for binary words.

\section{Conclusion}
In this paper, we studied cyclic equalizability, a notion introduced in the context of card-based cryptography, from the perspective of combinatorics on words. We established a complete characterization for two words over arbitrary finite alphabets, showing that cyclic equalizability coincides exactly with Abelian equivalence. This result generalizes the previously known characterization for binary words and resolves, as a special case, the question of cyclic equalizability over larger alphabets such as the ternary alphabet.

Beyond its theoretical interest, this characterization has implications for the analysis of card-based cryptographic protocols, where cyclic equalizability models the indistinguishability of sequences under card insertions and random cuts. Our result shows that, in the two-sequence case, this indistinguishability is completely determined by the multiset of cards.

\subsubsection*{Acknowledgement}
The authors would like to thank Kazumasa Shinagawa, Pakawut Jiradilok, Teetat Thamronglak, Weerawat Wongmanit, and Jirapat Tabtimthai for valuable discussions on this research. This work was supported by the 111th Anniversary Engineering Research Catalyst Fund Towards U Top~100.

\end{document}